\newtheorem{prop}{Proposition}[section]
\newtheorem{theor}{Theorem}[section]
\newtheorem{lemma}{Lemma}[section]
\newtheorem{example}{Example}[section]
\newtheorem{remark}{Remark}[section]
\numberwithin{equation}{section}
\newenvironment{proof}[1][Proof]{\noindent\textbf{#1.}}{\hfill$\Box$}
\newcommand{\n}{\hbox{$\scriptstyle\hbox{\rm
      l\kern-0.22em N}$}}
\newcommand{\N}{\hbox{$ I\kern -0.23em N$}}
\newcommand{\R}{\mathbb{R}}
\def\m{\noalign{\medskip}}
\def\PS #1 #2{\langle #1, #2 \rangle}
\def\1{1\!\!1}
\def\omegb{{\overline \Omega}}
\def\domeg{{\partial \Omega}}
\def\xb{{\bar x}}
\newcommand{\vfi}{\varphi}
\def\de{\delta}
\def\la{\lambda}
\def\ga{\gamma}
\def\al{\alpha}
\def\be{\begin{equation}}
\def\ee{\end{equation}}
\def\rife#1{(\ref{#1})}
\def\dd{d}
\begin{document}

\title{On the Large Time Behavior of Solutions of the Dirichlet problem for Subquadratic Viscous Hamilton-Jacobi Equations }
\author{Guy Barles\thanks{Laboratoire de
Math\'ematiques et Physique Th\'eorique (UMR 6083). F\'ed\'eration Denis Poisson (FR 2964) Universit\'e de Tours.
Facult\'e des Sciences et Techniques, Parc de Grandmont, 37200
Tours, France. E-mail address: barles@lmpt.univ-tours.fr.}
\and Alessio Porretta\thanks{Dipartimento di Matematica, Universit\`a di Roma Tor 
Vergata, Via della Ricerca Scientifica 1, 00133 Roma, Italia. E-mail address: porretta@mat.uniroma2.it.} \and
Thierry Tabet Tchamba$^*$\thanks{D\'epartement de Math\'ematiques, Facult\'e des Sciences,
Universit\'e de Yaound\'e I, BP: 812, Yaound\'e, Cameroun. E-mail address: tabet@lmpt.univ-tours.fr.}} 
 
\maketitle

\bigskip
\begin{small}
\noindent{\bf Abstract} In this article, we are interested  in the
large time behavior of solutions of the Dirichlet problem for
subquadratic viscous Hamilton-Jacobi Equations. In the
superquadratic case, the third author has proved that these
solutions can have only two different behaviors: either the
solution of the evolution equation converges to the solution of
the associated stationary generalized Dirichlet problem (provided that it
exists) or it behaves like $-ct+\varphi (x)$ where $c\geq0$ is a
constant, often called the ``ergodic constant" and $\varphi$ is a
solution of the so-called ``ergodic problem". 
In the present subquadratic case, we show that the situation is slightly more complicated: 
if the gradient-growth in the equation is like $|Du|^m$ with $m>3/2,$ 
then analogous results hold as in the superquadratic case, at least if $c>0.$  
But, on the contrary, if $m\leq 3/2$ or $c=0,$ then another different behavior 
appears since $u(x,t) + ct$ can be unbounded from below where $u$ is the solution 
of the subquadratic viscous Hamilton-Jacobi Equations.
\end{small}

\vspace{.5cm} {\small \noindent{\bf Key-words :} Viscous Hamilton-Jacobi
Equations, large time behavior, subquadratic case, Dirichlet problem, ergodic problem,
viscosity solutions.

\vspace{0.3cm}
\noindent{\bf AMS subject classifications : }
35K55, 35B40, 49L25}

\newpage
\tableofcontents
\section{Introduction}

We are interested in this work in the behavior, when $t \to +\infty,$ of the solution of the following initial-boundary value problem
\begin{eqnarray}
u_{t} - \Delta u + |Du|^m & = & f(x) \quad\hbox{in  }\Omega \times (0,+\infty) \label{vhje}\\
 u(x,0) & = & u_{0}(x) \quad \hbox{on  }\omegb \label{vhje-id}\\
u(x,t)  & =  & g (x) \quad \hbox{on  }\domeg \times (0, + \infty) \label{vhje-bc}
\end{eqnarray}
where $\Omega$ is a $C^2$- bounded and connected subset of $\R^N,$ $1< m \leq 2 $ and $f,
u_{0},g$ are real-valued continuous functions defined respectively
on $\omegb, \omegb$ and $\domeg.$ The boundary and initial data satisfy the following compatibility condition
\begin{equation}\label{compatibility}
u_{0}(x)=g(x) \quad\hbox{ for all } x\in \partial \Omega.
\end{equation}It is standard to show that this
problem has a unique solution $u : \omegb \times [0, + \infty) \to \R$ and, as long as
one does not need regularity properties, proofs of this fact are
easy by using viscosity solutions theory (see Barles and Da Lio \cite{BDL} and
references therein).

In  the superquadratic case ($m>2$) the study of the asymptotic behavior has been done by the third author 
in \cite{TTT} where it is shown that the solution $u$ can have only two different behaviors whether the equation
\begin{equation} \label{vhje-s}
- \Delta v + |Dv|^m  =  f(x) \quad\hbox{in  }\Omega\; ,
\end{equation} has {\em bounded} subsolutions or not. If (\ref{vhje-s}) has a
bounded subsolution, then there exists a solution $u_\infty$ of
the stationary Dirichlet problem, i.e. of (\ref{vhje-s}) together
with the generalized Dirichlet boundary condition
\begin{equation}\label{vhjes-bc}
v(x)  =   g(x) \quad \hbox{on  }\domeg \; ,
\end{equation}
and $u(x,t) \to u_\infty (x)$ uniformly on $\omegb.$ This is the
most expected behavior. On the other hand, it can happen that (\ref{vhje-s}) has
no bounded subsolution and, in this case, one has to introduce the
{\em ergodic problem} with state constraint boundary conditions, namely (the reader has to keep in mind that
we are here in the superquadratic case)
\begin{equation} \label{vhje-sc}
\left\{
\begin{array}{rcl}
- \Delta \varphi + |D \varphi|^m & = & f(x) +c  \quad\hbox{in   }\Omega\; ,\\
- \Delta \varphi + |D \varphi |^m & \geq & f(x) +c \quad\hbox{on  }\domeg\;.
\end{array}
\right.
\end{equation}
We recall that, in this type of problems, both the solution $\varphi$
and the constant $c$ (the ergodic constant) are unknown. The existence and uniqueness of
solutions $(c,\varphi)$ for (\ref{vhje-sc}) is studied in Lasry and
Lions \cite{LL} (see also \cite{TTT} for a viscosity solutions
approach): the constant $c$ is indeed unique while the solution $\varphi$
is continuous up to the boundary and unique up to an additive constant.

Concluding the reference when $m>2$, it is proved in \cite{TTT}  that, if \rife{vhje-s} has no bounded subsolution, then  $c>0$ and   the function $u(x,t) + ct$
converges uniformly on $\omegb$ to a solution $\varphi$ of
(\ref{vhje-sc}) when $t\to + \infty.$ In fact, in the superquadratic
case, even if the boundary condition reads $u(x,t) + ct = g(x) +ct$ on $\domeg$ with $g(x) +ct\to+\infty$ as $t\to+\infty,$ there is a loss of boundary condition (cf. \cite{BDL}) and $u(x,t) + ct$ remains bounded on $\omegb.$

The first key difference in the subquadratic case is 
that there is no loss of boundary conditions and $u(x,t) + ct$ is actually equal to  $g(x) +ct$ on the boundary. Formally this forces the limit of $u(x,t) + ct$ to tend to $+\infty$ on the boundary and, actually, the analogue of (\ref{vhje-sc}) is
\begin{equation} \label{vhje-sub}
\left\{
\begin{array}{rcl}
- \Delta \varphi + |D \varphi|^m &=&  f(x) +c  \quad\hbox{ in } \Omega\; ,\\
\varphi (x)  &\to&  +\infty \quad\hbox{ when  } x \to \domeg\; .
\end{array}
\right.
\end{equation}
This problem was also studied in \cite{LL} where it is proved 
 that there exists a unique constant $c$ such that
(\ref{vhje-sub}) has a solution and, as in the superquadratic
case, the solution is unique up to an additive constant.

Coming back to the asymptotic behavior of $u,$ it can be thought,
at first glance, that it is essentially the same as in the
superquadratic case, with problem (\ref{vhje-sc}) being replaced
by problem (\ref{vhje-sub}). Surprisingly this is not true for any
$1<m\leq 2$ nor for any $c.$ Part of the explanations concerning $c$
is that the equation
$$- \Delta \varphi + |D \varphi|^m  =  f(x) + \tilde c  \quad\hbox{ in   }\Omega\; ,$$
has subsolutions which are bounded on $\omegb$ for any $\tilde c \geq c$ 
in the superquadratic case but only for $\tilde c > c$ in the subquadratic one.

In this article we prove the following: first, if the stationary Dirichlet
problem (\ref{vhje-s})-(\ref{vhjes-bc}) has a solution, then $u$
converges uniformly in $\omegb$ to this solution. Otherwise we
show that necessarily $c\geq 0$ and $u$ can have different types of behavior. Observe that  a curious feature in the
subquadratic case (which does not occur in the superquadratic
case) is that $c$ can be equal to $0$ even if the stationary
Dirichlet problem has no solution; this can be seen as a corollary
of the above remark. The different behaviors of $u$ can be described as
follows.
\begin{itemize}
\item[(i)] If $3/2 < m \leq  2$ and $c>0$ then $u(x,t) + ct$
converges locally uniformly in $\Omega$ to $\varphi$ where
$\varphi$ is a solution of (\ref{vhje-sub}). 
\item[(ii)]  If $1 < m \leq 3/2$ and $c>0$ then $\displaystyle \frac{u(x,t)}{t} \to
-c$ locally uniformly in $\Omega$ but it can happen that $u(x,t)+ ct \to -\infty$ in $\Omega$.
 \item[(iii)] If $c=0 $, then
$ \displaystyle\frac{u(x,t)}{t} \to 0$ locally uniformly in $\Omega$ but it can 
happen that $u(x,t)\to -\infty$ in $\Omega$.
\end{itemize}

The behaviors (ii) and (iii) are striking differences with the superquadratic case. This is related to  the  blow-up rate of $u(x,t)$ which we estimate in Theorem \ref{rate}, and which is influenced by   the behavior of $\varphi$ (solution of \rife{vhje-sub}) near the boundary $\domeg$ (changing according to the values of $m$) and eventually by the case that  $c=0$. We prove the optimality of such estimates  in star-shaped
domains in  cases when $f+c < 0$ on $\omegb$: in such situations, we show that it can
actually happen that $u(x,t)+ct\to -\infty.$ 

\vskip0.6em
This article is organized as follows. In Section \ref{ergodicproblem}, we
recall the main results about the problem (\ref{vhje-sub}); these
results are mainly borrowed from Lasry and Lions \cite{LL} and
Porretta and V\'eron \cite{PV} where the precise behavior of
$\varphi$ near $\domeg$ is described. Section~\ref{stationaryconvergence} is
devoted to  the easy case when
(\ref{vhje-s})-(\ref{vhjes-bc}) has a solution; in such situation $u(\cdot, t)$ is bounded and converges to the (unique) solution of the stationary problem. We recall that the existence of a stationary solution  corresponds, for the ergodic constant $c$ of \rife{vhje-sub},  to the case $c<0$ (see Proposition \ref{existencecondition}). In Section  \ref{bd-estimates} we consider the case $c\geq 0$ and we state the estimates on the blow-up rate of $u$ proving, as a consequence,  that  
$\frac{u(x,t)}{t} \to
-c$ locally uniformly in $\Omega$, in the whole range $1<m\leq 2$.
In the Section \ref{ergodicconvergence}, we prove the result (i) above. Finally, in Section \ref{nonconvergence} we show that  a similar 
result cannot hold in general in the range $m\geq \frac 32$ or if $c=0$
proving that, in these cases, we have $u+ct \to -\infty$ at least in some  circumstances. We leave to the Appendix the construction of sub and supersolutions, for the problem (\ref{vhje})--(\ref{vhje-id})--(\ref{vhje-bc}), which we use to obtain the crucial estimates on the blow-up rate of $u$ when $c\geq 0$.
\vskip1em
Throughout this paper, we assume that $\Omega$ is a domain of class $C^2$. We denote by $\dd$ the signed-distance to $\domeg$, which is positive in $\Omega$, i.e. $\dd (x):=\inf \{ |x-y| : y \in\partial\Omega\}$ if $x\in \Omega$ and negative in the complementary of $\Omega$ in $\R^N.$ As a consequence of the regularity of $\domeg,$ $\dd$ is a  $C^2$- function in a neighborhood $\mathcal{W}$ of $\domeg.$ We also denote by $\nu$ the $C^1$- function defined by
$\nu(x)=-D \dd (x)\hbox{ in }\mathcal{W};$
if $x\in \domeg,$ then $\nu(x)$ is just the unit outward normal vector to $\domeg$ at $x.$

\section{Preliminary Results on the Stationary Ergodic and Dirichlet Problems}\label{ergodicproblem}

In this section, we deal with  properties of the pair $(c, \varphi)$ solution of (\ref{vhje-sub}). 
We start by recalling the main results concerning $(c,\varphi)$ which were proved in \cite{LL} and  \cite{PV}.
Next, we provide some useful properties of $c$ giving a relationship between the stationary ergodic problem (\ref{vhje-sub}) and the stationary Dirichlet problem (\ref{vhje-s})-(\ref{vhjes-bc}). We end the section with an example to illustrate the possibly unsolvability of (\ref{vhje-s})-(\ref{vhjes-bc}). 

We start with
\begin{theor}[On the Stationary Ergodic Problem]\label{limits}\text{}\\
Assume that $1< m\leq 2$ and that $f\in C(\omegb).$ There exists a unique constant $c \in \R$ such that the problem (\ref{vhje-sub}) has a solution $\varphi \in W_{loc}^{2,p}(\Omega)$ for every  $p>1$. This solution is unique up to an additive constant and satisfies the following properties
\begin{itemize}
\item[(i)] If $1<m<2,$ then 
\begin{equation}\label{expansion4}
\varphi(x) = C^*\dd
^{-\frac{2-m}{m-1}}(x)(1+o(1))\;\hbox{ as }\; \dd
(x)\to 0\;\hbox{ , with  }C^*=\frac{(m-1)^{\frac{m-2}{m-1}}}{(2-m)}\, .
\end{equation}
\item[(ii)] If $m=2,$ then
\begin{equation}\label{expansion4'}
\varphi(x) = |\log \dd
(x)|(1+o(1))\;\hbox{ as }\; \dd
(x)\to 0.
\end{equation}
\item[(iii)] There exists a constant $\Lambda>0$ depending only on $m,$ ${\Omega}$ and $f$ such that 
\begin{equation}\label{gradientbound}
|D \varphi(x)|\leq \Lambda
\dd
^{-\frac{1}{m-1}}(x)\ \quad\hbox{ in } \;\Omega.
\end{equation}
\item[(iv)] We have
\begin{equation}\label{pv}
\lim\limits_{\dd
(x)\to 0} \dd
^{\frac1{m-1}}(x)D \vfi(x)=c_m\,\nu(x)
\end{equation} where $c_m= (m-1)^{-\frac{1}{m-1}}.$
\end{itemize}
\end{theor}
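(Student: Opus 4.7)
The plan is to rely on the previous work of Lasry--Lions \cite{LL} and Porretta--V\'eron \cite{PV}, where all four assertions are essentially established, and to organize the argument into four blocks.

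For the existence and uniqueness of $(c,\varphi)$, I would use the vanishing discount method. For each $\lambda > 0$, one solves
\[
-\Delta \varphi_\lambda + \lambda \varphi_\lambda + |D\varphi_\lambda|^m = f(x) \quad \text{in }\Omega,\qquad \varphi_\lambda(x)\to+\infty\ \text{as}\ \dd(x)\to 0.
\]
Existence of $\varphi_\lambda \in W^{2,p}_{loc}(\Omega)$ is obtained by sandwiching between barriers modelled on the profile $C^*\dd^{-(2-m)/(m-1)}$ (or $|\log \dd|$ when $m=2$), together with standard elliptic regularity. Using a Bernstein-type interior gradient bound, the oscillation of $\varphi_\lambda - \varphi_\lambda(x_0)$ on each compact $K\subset\Omega$ stays controlled uniformly in $\lambda$, while $\lambda\varphi_\lambda$ is locally bounded; passing to the limit $\lambda\to 0^+$ gives $-\lambda\varphi_\lambda(x_0)\to c$ and $\varphi_\lambda - \varphi_\lambda(x_0)\to \varphi$ locally uniformly, with $(c,\varphi)$ solving \rife{vhje-sub}. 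Uniqueness of $c$ and of $\varphi$ up to an additive constant follows from comparing two solutions on $\{\dd>\varepsilon\}$ and letting $\varepsilon\to 0^+$, exploiting the fact that all solutions share the same leading boundary profile (to be proved in step two).

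For the expansion (i)--(ii), I would verify the ansatz $\psi=C^*\dd^{-(2-m)/(m-1)}$ by direct substitution: both $-\Delta\psi$ and $|D\psi|^m$ scale as $\dd^{-m/(m-1)}$, while $f+c$ is $O(1)$. Balancing the two leading coefficients yields $(C^*)^{m-1} = (m-1)^{m-2}/(2-m)^{m-1}$, and so precisely $C^*=(m-1)^{(m-2)/(m-1)}/(2-m)$; the case $m=2$ gives the logarithmic profile in the same way. The sharp statement $\varphi/\psi\to 1$ is obtained by constructing ordered sub- and supersolutions of the form $(1\pm\eta)\psi+\text{lower order}$ in a thin strip $\{0<\dd<\delta\}$, and applying the comparison principle, as in \cite{PV}. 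The gradient bound (iii) is of Bernstein type: one derives a differential inequality for $|D\varphi|^2$ in which the superlinear Hamiltonian provides absorption and, after localization against a suitable power of $\dd$, one extracts $|D\varphi|(x)\le\Lambda\,\dd(x)^{-1/(m-1)}$.

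Finally, the gradient limit (iv) follows by combining the bound (iii) with the expansion (i). Differentiating $\psi=C^*\dd^{-(2-m)/(m-1)}$ gives
\[
D\psi = \frac{2-m}{m-1}\,C^*\,\dd^{-1/(m-1)}\,\nu = c_m\,\dd^{-1/(m-1)}\,\nu,
\]
using the identity $\frac{2-m}{m-1}\,C^* = (m-1)^{-1/(m-1)} = c_m$ and $\nu=-D\dd$. To make this rigorous I would perform a boundary blow-up: for $x_0\in\domeg$, rescale $\varphi$ in the normal direction with the correct power of $\dd(x_0)$, invoke (iii) to extract a locally uniform subsequential limit, and observe that the expansion (i) forces this limit to be the one-dimensional profile, thereby yielding \rife{pv}. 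The main technical obstacle here, and for the theorem as a whole, is the sharp boundary analysis with the exact constant $C^*$: the delicate barrier construction of \cite{PV} drives both (ii) and (iv), while the existence/uniqueness block and the Bernstein gradient bound are then fairly routine once these barriers are in hand.
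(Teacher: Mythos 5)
The paper itself does not prove Theorem~\ref{limits}: it simply cites \cite[Theorems I.1, IV.1, VI.1]{LL} for the existence/uniqueness of $(c,\varphi)$ together with (i)--(iii), and \cite[Theorem 1.1-B]{PV} for (iv), which is exactly what you do. Your supplementary sketch of those references (vanishing discount, barrier matching with the exact constant $C^*$, Bernstein gradient bound, boundary blow-up) is a faithful outline of the cited arguments, and the balancing identity $(C^*)^{m-1}=(m-1)^{m-2}/(2-m)^{m-1}$ as well as $\frac{2-m}{m-1}C^*=c_m$ both check out.
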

As far as the proof of Theorem~\ref{limits} is concerned, we refer the reader to the proofs of \cite[Theorems I.1, IV.1 and VI.1]{LL} for (i), (ii), (iii) and to \cite[Theorem 1.1-B]{PV} for (iv).
\vskip0.5em
We continue with further estimates on the solutions of (\ref{vhje-sub}), which can be deduced by the previous ones.
\begin{lemma}\label{properties} Assume $1<m\leq 2$, $f\in W^{1,\infty}(\Omega)$ and let $\vfi $ be a solution of (\ref{vhje-sub}) given by Theorem~\ref{limits}. Then $\varphi\in C^{2,\beta}_{loc}(\Omega)$ for all $\beta\in (0,1)$ and

\vskip0.2em
\noindent (i) There exists a  constant $K_0>0$ such that, for every $x\in \Omega,$ we have
\begin{equation}\label{Hessianfi1}
|D^2\vfi(x)|\leq K_0\, \dd^{-\alpha-2}(x)\quad\hbox{ with }\quad \alpha=\frac{2-m}{m-1}\,.
\end{equation}
\vskip0.2em
\noindent(ii) There exists  $K_1>0$ and $\sigma>0$  such that, for every $x\in \Omega$ with $0\leq \dd(x)<\sigma$
\begin{equation}\label{Hessianfi2}
 |D^2\vfi(x)| \leq K_1 \,|D \vfi(x)|^m 
\end{equation} and 
\begin{equation}\label{fi0}
\left\{
\begin{array}{rl}
&|\vfi(x)|\leq  K_1\,  |D \vfi(x)|^{2-m} \quad\quad\quad\quad\hbox{ if } 1<m<2,\\
&\\
&|\vfi(x)|\leq  |\log(|D \vfi(x)|)|+K_1\quad\quad\quad\hbox{ if } m=2.
\end{array}
\right.
\end{equation}
\end{lemma}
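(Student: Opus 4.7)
\medskip\noindent\textbf{Proof proposal.}
The plan is first to upgrade the Sobolev regularity of $\varphi$ to a classical $C^{2,\beta}_{loc}$ statement, then derive (i) from a rescaling argument, and finally to deduce (ii) and (iii) by combining (i) with the sharp boundary behavior of $\varphi$ and $D\varphi$ furnished by Theorem~\ref{limits}. The classical regularity itself is a bootstrap: since $\varphi\in W^{2,p}_{loc}(\Omega)$ for every $p$, Sobolev embedding gives $D\varphi\in C^{0,\beta}_{loc}(\Omega)$ for every $\beta\in(0,1)$; as $f\in W^{1,\infty}(\Omega)\subset C^{0,1}$, rewriting $-\Delta\varphi=(f+c)-|D\varphi|^m$ shows that $\Delta\varphi$ is locally $C^{0,\beta}$, and Schauder theory yields $\varphi\in C^{2,\beta}_{loc}(\Omega)$.

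For the interior Hessian bound (i), the key point is that the equation is almost invariant under a scaling consistent with the expected gradient blow-up $\dd^{-1/(m-1)}$. Given $x_0\in\Omega$ with $\dd(x_0)$ small, I would set $r=\dd(x_0)/2$, so that $B_r(x_0)\subset\{\dd>r\}$, and define $\psi(y):=r^{\alpha}\varphi(x_0+ry)$ on $B_1(0)$. Using $\alpha=\frac{2-m}{m-1}$, hence $1+\alpha=\frac{1}{m-1}$ and $2+\alpha=\frac{m}{m-1}$, a direct computation shows
\[
-\Delta\psi+|D\psi|^m \,=\, r^{m/(m-1)}\,(f+c)(x_0+ry)\quad\text{in } B_1(0),
\]
so the right-hand side is uniformly bounded in $r$ (because $m/(m-1)>0$), while Theorem~\ref{limits}(iii) gives $|D\psi|=r^{1+\alpha}|D\varphi|\leq\Lambda$ in $B_1$. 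Thus $\psi$ solves a semilinear elliptic equation with bounded data and bounded gradient, and standard interior estimates (first $W^{2,p}$ applied to $-\Delta\psi=(f+c)-|D\psi|^m$, then Schauder) give $|D^2\psi(0)|\leq K_0$ with $K_0$ depending only on $m,\Lambda,\|f\|_{W^{1,\infty}}$ and $c$. Unscaling yields $|D^2\varphi(x_0)|\leq K_0\,\dd(x_0)^{-\alpha-2}$ near $\domeg$; far from the boundary the estimate is immediate from local $C^{2,\beta}$ regularity.

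For (ii) and (iii), I would exploit the sharp asymptotics of Theorem~\ref{limits}. Since $\alpha+2=m/(m-1)$, the bound \rife{Hessianfi1} reads $|D^2\varphi|\leq K_0\,\dd^{-m/(m-1)}$; combining with \rife{pv}, which yields $|D\varphi|\geq \tfrac{c_m}{2}\dd^{-1/(m-1)}$ for $\dd<\sigma$ small enough, and raising to the $m$-th power, gives \rife{Hessianfi2}. For \rife{fi0} in the case $1<m<2$, the expansion \rife{expansion4} furnishes $|\varphi|\leq 2C^*\dd^{-\alpha}$ near $\domeg$; again \rife{pv} yields $|D\varphi|^{2-m}\geq C\,\dd^{-\alpha}$, so dividing gives $|\varphi|\leq K_1|D\varphi|^{2-m}$. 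For $m=2$, \rife{expansion4'} gives $|\varphi|\leq 2|\log\dd|$ and \rife{pv} gives $|D\varphi|$ comparable to $\dd^{-1}$, hence $|\log\dd|$ comparable to $\log|D\varphi|$, and one concludes.

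The main obstacle is really identifying the correct scaling in step (i): the exponent $\alpha$ has to be chosen so that the rescaled gradient is uniformly bounded \emph{and} the equation keeps a uniformly controlled right-hand side. Once the identity $1+\alpha=1/(m-1)$ is recognised as dictating the choice $\psi(y)=r^\alpha\varphi(x_0+ry)$, everything else is bookkeeping on exponents plus an invocation of standard elliptic regularity.
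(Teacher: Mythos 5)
Your proposal is correct and follows essentially the same route as the paper's proof: the same bootstrap for $C^{2,\beta}_{loc}$ regularity, the same rescaling $\psi(y)=r^\alpha\varphi(x_0+ry)$ with $r=\dd(x_0)/2$ exploiting $m(\alpha+1)=\alpha+2$, and the same combination of \rife{Hessianfi1} with \rife{pv} (and \rife{expansion4}, \rife{expansion4'}) for (ii). The only minor deviation is that you obtain the gradient bound $|D\psi|\leq\Lambda$ directly from \rife{gradientbound}, whereas the paper re-derives it from the interior gradient estimates of Lasry--Lions; both are legitimate, and your shortcut is slightly cleaner.
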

\begin{proof}[\bf Proof of Lemma \ref{properties}] We first remark that the $C^{2,\beta}_{loc}$--regularity of the solutions of (\ref{vhje-sub}) comes from the additional assumption on $f$ ($f\in W^{1,\infty}(\Omega)$) and a standard bootstrap argument~: indeed, by Theorem~\ref{limits}, $\varphi$ is in $W_{loc}^{2,p}(\Omega)$ for every  $p>1$, hence in $C^{1,\beta}_{loc}(\Omega)$ for any $\beta \in (0,1)$, and therefore a standard regularity result implies that $\varphi\in C^{2,\beta}_{loc}(\Omega)$ for all $\beta\in (0,1)$ since $|D \varphi|^m$ and $f$ are in $C^{0,\beta}_{loc}(\Omega)$.

In order to prove (i), let $x_0\in \Omega,$ and $r=\frac{\dd (x_0)}2$. We introduce the function $\psi(x)=r^{\al}\vfi(x_0+rx)$ and compute
\begin{equation*}
D\psi(x)=r^{\al+1}D\vfi(x_0+rx) \quad\text{ and }\quad D^2\psi(x)=r^{\al+2}D^2\vfi(x_0+rx).
\end{equation*} 
Since $m(\al+1)=\al+2,$ it is easy to see that $\psi$ is a viscosity solution of
$$ -\Delta \psi+|D \psi|^m=r^{\al+2} (f(x_0+rx)+c)\quad \hbox{ for any } x\in B(0,1)\,. $$ Since
$r^{\alpha+2} f$ and  $|D
f|r^{\alpha+2}$ are in $L^\infty(\Omega),$ we can use the interior estimates  available in \cite[Theorem A.1]{LL} and obtain
$$ \|D \psi\|_{L^\infty(B(0,\frac12))}\leq K\,$$
and then, by elliptic regularity (see e.g. \cite{GT}),
$$ \|D^2 \psi\|_{L^\infty(B(0,\frac12))}\leq K'\, $$
which yields $$|D^2\vfi(x_0+rx)| \leq K'r^{-\al-2} \quad\hbox{ for all } x\in B(0,1/2).$$ By taking $x=0$ and remembering the definition of $r,$ we  obtain (\ref{Hessianfi1}) with $K_0:=2^{\al+2}K'.$ The estimate (\ref{Hessianfi2}) is a consequence of (\ref{pv}) which implies
\begin{equation}\label{pv'}
|D \vfi(x)|\geq K\, \dd^{-\frac1{m-1}}(x)
\end{equation}
for some constant $K>0$ and for $x$ in a suitable neighborhood of $\domeg$. Since $\frac1{m-1}=\frac{\alpha+2}m,$ combining (\ref{pv'})  with (\ref{Hessianfi1}) we obtain 
$$|D^2\vfi(x)|\leq K_0\, \dd^{-\alpha-2}(x)=K_0 \dd^{-\frac{m}{m-1}}(x)\leq K_0\biggl(\frac{1}{K}|D \vfi(x)|\biggr)^m$$
hence   (\ref{Hessianfi2}) for any  $K_1>K_0/K^m$. As far as (\ref{fi0}) is concerned, we use (\ref{expansion4}) and (\ref{pv'}) if $m<2$ and   (\ref{expansion4'}) and (\ref{pv'}) if $m=2$ obtaining
that \rife{fi0} holds true for some $K_1>0$.

\end{proof}

\vskip0.5em

We continue by showing monotonicity and stability properties of the ergodic constant $c$ with respect to the domain.

\begin{prop}\label{ergodicdependence} \item[(i)]
Let $\Omega'$ be an open bounded subset of $\R^N$ such that $\Omega\subseteq\Omega'.$ Let $c_\Omega$ and $c_{\Omega'}$ be the ergodic constants associated to (\ref{vhje-sub}) in $\Omega$ and  $\Omega'$ respectively.
Then we have
\begin{equation}\label{ergodicestimates}
c_\Omega\leq c_{\Omega'}.
\end{equation} 
\item[(ii)] Let $\Omega'$ be an open bounded subset of $\R^N$ such that $\Omega\subset\subset\Omega'.$ The respective ergodic constants $c_\Omega$ and $c_{\Omega'}$ of $\Omega$ and $\Omega'$ satisfy  
\begin{equation}\label{ergodicestimates1}
c_\Omega< c_{\Omega'}.
\end{equation} 
\item[(iii)] Moreover, the ergodic constant $c$ depends continuously on $\Omega.$ Otherwise said, for $0<\eta<1,$ if $c_\eta$ is the ergodic constant in (\ref{vhje-sub}) set in $\Omega+B(0,\eta)$\footnote{$\Omega+B(0,\eta):=\{x\in \R^N: \dd (x) > - \eta\}.$}
then  
\begin{equation}\label{dependence}
c_\eta\downarrow c_\Omega \quad\hbox{ as }\quad \eta\downarrow0
\end{equation} where $B(0,\eta)$ is a ball 
of radius  $\eta.$
\end{prop}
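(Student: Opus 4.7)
The plan addresses the three assertions in turn, each with a different technique.

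For \textbf{(i)} I rely on the characterization of the ergodic constant recalled just before Theorem \ref{limits}: on any bounded domain $D$, the equation $-\Delta v + |Dv|^m = f + \tilde c$ admits a bounded subsolution on $\overline D$ if and only if $\tilde c > c_D$. If $\Omega \subseteq \Omega'$ and $\tilde c > c_{\Omega'}$, then any bounded subsolution on $\overline{\Omega'}$ automatically restricts to one on $\overline\Omega$ (the viscosity inequality is preserved under restriction to open subsets and boundedness is inherited), so $c_\Omega \leq \tilde c$. Letting $\tilde c \downarrow c_{\Omega'}$ yields \rife{ergodicestimates}.

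For \textbf{(ii)} the hypothesis $\overline\Omega \subset\subset \Omega'$ is crucial: it guarantees that $\varphi' := \varphi_{\Omega'}$ is of class $C^{2,\beta}$ and bounded on a neighborhood of $\overline\Omega$ by Lemma \ref{properties}. My strategy is to perturb $\varphi'$ linearly to produce a bounded subsolution for a strictly smaller constant. Set $b(x) := m|D\varphi'(x)|^{m-2}D\varphi'(x)$, which is continuous on $\overline\Omega$ because $m>1$. Standard linear elliptic theory provides a nonnegative bounded solution $\chi\in C(\overline\Omega)\cap W^{2,p}_{loc}(\Omega)$ to
\begin{equation*}
-\Delta \chi + b(x)\cdot D\chi = -1 \ \text{in } \Omega, \qquad \chi = 0 \ \text{on } \partial\Omega.
\end{equation*}
Define the candidate subsolution $\psi_\epsilon := \varphi' + \epsilon\chi$. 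Since $p\mapsto |p|^m$ is $C^1$ on $\R^N$ for $m>1$, a uniform mean-value expansion (justified by the uniform continuity of $p\mapsto m|p|^{m-2}p$ on the bounded set $\{D\varphi'(x)+tD\chi(x):x\in\overline\Omega,\,t\in[0,\epsilon]\}$) gives $|D\psi_\epsilon|^m = |D\varphi'|^m + \epsilon\, b\cdot D\chi + \epsilon\, r_\epsilon(x)$ with $\|r_\epsilon\|_{L^\infty(\overline\Omega)}\to 0$. Substituting yields $-\Delta\psi_\epsilon + |D\psi_\epsilon|^m = f + c_{\Omega'} - \epsilon + \epsilon r_\epsilon \leq f + (c_{\Omega'}-\epsilon/2)$ in $\Omega$ for $\epsilon$ small enough, so $\psi_\epsilon$ is a bounded subsolution for $\tilde c = c_{\Omega'}-\epsilon/2$. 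The characterization used in (i) then gives $c_\Omega \leq \tilde c < c_{\Omega'}$.

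For \textbf{(iii)}, (i) shows that $\eta\mapsto c_\eta$ is nondecreasing, so $c_\eta \downarrow \bar c \geq c_\Omega$; it remains to prove $\bar c = c_\Omega$. Let $\varphi_\eta$ be the ergodic solution in $\Omega_\eta := \Omega + B(0,\eta)$, normalized by $\varphi_\eta(x_0)=0$ at a fixed $x_0 \in \Omega$, and let $d_\eta$ denote the distance to $\partial\Omega_\eta$. On any compact $K\subset\subset\Omega$ one has $d_\eta(x)\geq d(x)\geq \delta_K > 0$ for small $\eta$, and the gradient bound \rife{gradientbound} applied in $\Omega_\eta$ provides a uniform bound on $|D\varphi_\eta|$ over $K$ (the constant $\Lambda$ of Theorem \ref{limits}(iii) depending on $\Omega_\eta$ only through uniform $C^2$-bounds, stable as $\eta\downarrow 0$). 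Arzel\`a-Ascoli then extracts a subsequence $\varphi_{\eta_k}\to \varphi_\infty$ locally uniformly in $\Omega$, and stability of viscosity solutions gives $-\Delta\varphi_\infty + |D\varphi_\infty|^m = f + \bar c$ in $\Omega$.

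The main obstacle is to show $\varphi_\infty(x)\to +\infty$ as $x\to\partial\Omega$, from which the uniqueness of the ergodic constant in Theorem \ref{limits} forces $\bar c = c_\Omega$ and completes the proof. For this I appeal to the asymptotic \rife{pv} in each $\Omega_\eta$ -- whose limiting constant $c_m$ is intrinsic to $m$ alone -- to derive, uniformly in small $\eta$, a lower bound $|D\varphi_\eta(x)|\geq K\, d_\eta(x)^{-1/(m-1)}$ near $\partial\Omega_\eta$. Using that $d_\eta(x)=d(x)+\eta$ in a tubular neighborhood of $\partial\Omega$, passage to the limit gives $|D\varphi_\infty(x)|\geq K\, d(x)^{-1/(m-1)}$ near $\partial\Omega$, with direction controlled by \rife{pv} (parallel to the outward normal at the boundary). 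Since $\int_0^1 t^{-1/(m-1)}dt=+\infty$ for every $1<m\leq 2$, integrating $D\varphi_\infty$ along inward-pointing normal segments concludes that $\varphi_\infty\to +\infty$ on $\partial\Omega$, as needed.
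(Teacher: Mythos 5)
Your argument for \textbf{(i)} has a circularity problem within the paper's logical architecture. You invoke the characterization ``a bounded subsolution on $\overline D$ exists for the $\tilde c$-equation iff $\tilde c > c_D$'' as if it were a preliminary; in fact this is precisely Proposition~\ref{existencecondition}, which appears \emph{after} Proposition~\ref{ergodicdependence} and whose proof explicitly uses Proposition~\ref{ergodicdependence}-(iii) (the bounded subsolutions for $\tilde c > c$ are obtained by restricting the ergodic solutions $\varphi_\eta$ on the enlarged domains $\Omega_\eta$, which requires knowing $c_\eta\downarrow c$). Your step (i) needs exactly that ``hard'' direction (that every $\tilde c > c_{\Omega'}$ lies in $\mathcal{S}_{\Omega'}$): $\varphi_{\Omega'}$ itself cannot serve as a bounded subsolution since $\partial\Omega\cap\partial\Omega'$ may be nonempty when $\Omega\subseteq\Omega'$ is not compact. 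The paper instead proves (i) directly: it examines $\max_{\overline\Omega}(\mu\varphi_{\Omega'}-\varphi_\Omega)$ for $\mu<1$, shows via the boundary asymptotics \rife{expansion4}--\rife{expansion4'} that the maximum is interior even when the boundaries touch, and compares the equations at the max point using convexity of $|p|^m$, then lets $\mu\to 1$.

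Your argument for \textbf{(ii)} is a genuine and valid alternative to the paper's strong-maximum-principle contradiction. The perturbation $\psi_\epsilon=\varphi'+\epsilon\chi$, where $\chi$ solves the linearized Dirichlet problem with right-hand side $-1$, produces a bounded subsolution for a strictly smaller $\tilde c$, and this only uses the ``easy'' direction of the characterization (proved by the same $\max$-argument as (i)), so no circularity. Two small slips: $\chi$ is nonpositive, not nonnegative, by the maximum principle (though boundedness is all you use); and you do not actually need Lemma~\ref{properties}/$f\in W^{1,\infty}$ here --- $\varphi'\in W^{2,p}_{loc}(\Omega')\subset C^1_{loc}(\Omega')$ already suffices to carry out the computation a.e.\ on $\overline\Omega\subset\subset\Omega'$.

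For \textbf{(iii)} your fleshed-out argument is consonant with the paper's brief appeal to ``stability and uniqueness''. However the pivotal step --- a lower bound $|D\varphi_\eta(x)|\geq K\,d_\eta(x)^{-1/(m-1)}$ near $\partial\Omega_\eta$ that is uniform in $\eta$, and likewise uniform control of the direction --- is not a consequence of the results you cite: \rife{pv} is a fixed-domain asymptotic and the constant/neighborhood it produces could a priori degenerate as the domain varies. It is plausible (the family $\Omega_\eta$ is uniformly $C^2$), but you need an explicit uniform barrier or a quantitative version of \rife{pv} to make this rigorous.
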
 
\vskip0.5em
\begin{proof}[\bf Proof of Proposition~\ref{ergodicdependence}] (i) Let $(c_{\Omega},\varphi_{\Omega})$ and $(c_{\Omega'},\varphi_{\Omega'})$ be the pair of  solutions of the ergodic problem (\ref{vhje-sub})  in $\Omega$ and $\Omega'$ respectively. From what we obtained above, the constants $c_{\Omega}$ and $c_{\Omega'}$ are unique whereas the functions $\varphi_{\Omega}$ and $\varphi_{\Omega'}$ are unique up to a constant.
We study $\max_{\omegb}(\mu\varphi_{\Omega'}-\varphi_{\Omega})$ for some $\mu\in(0,1)$ close to $1$.  Now, we need to show that  the maximum is achieved inside $\Omega$.

Observe that, since $\Omega\subseteq\Omega',$ it could happen that  $\domeg\cap\partial\Omega'\neq\emptyset,$ meaning that the two domains $\Omega$ and $\Omega'$ touch at some points. Due to the behavior of  $\vfi_{\Omega'}$ and $\vfi_{\Omega}$ near $\partial\Omega'$ and $\partial\Omega$ respectively, we first deal with points on $\partial\Omega$ which do not belong to $\partial\Omega'$ and next we treat common points  of $\partial\Omega$ and $\partial\Omega'.$

We pick any $x_0\in \domeg$  such that $x_0\notin\partial\Omega';$ in this case, since  $\vfi_{\Omega'}$ is bounded in $\omegb\setminus(\domeg\cap\partial\Omega'),$ it  follows that
\begin{equation*}
\lim\limits_{x\to x_0} \, (\mu\varphi_{\Omega'}-\varphi_{\Omega})(x)= \mu\varphi_{\Omega'}(x_0)- \lim\limits_{x\to x_0} \varphi_{\Omega}(x)=-\infty.
\end{equation*}
On the other hand, if   $x_0\in\domeg\cap\partial\Omega',$ using the asymptotic behavior  \rife{expansion4} and that, if $d_{\partial \Omega'}$ denotes the distance to $\partial \Omega'$, $d_{\partial \Omega'}(x)\geq \dd (x)$ we have, when $1<m<2$
\begin{equation*}
(\mu\varphi_{\Omega'}-\varphi_{\Omega})(x)\leq   C^*(\mu-1) \dd
^{-\frac{2-m}{m-1}}(x) (1+o(1)) 
\quad\hbox{ as } x\to x_0 \,,
\end{equation*} 
hence $(\mu\varphi_{\Omega'}-\varphi_{\Omega})(x)\to -\infty$ in this case too. When $m=2$, the same conclusion holds   by the use of (\ref{expansion4'}).
Therefore, in any case  it follows that $(\mu\varphi_{\Omega'}-\varphi_{\Omega})$ has a maximum point $x_\mu\in \Omega$.   
Going back to the equations solved by $\mu\varphi_{\Omega'}$ and $\varphi_{\Omega},$ we obtain:
\begin{eqnarray}\label{eq1}
-\Delta (\mu\vfi_{\Omega'})(x_\mu)+|D(\mu\vfi_{\Omega'})(x_\mu)|^m&=&-\mu\Delta \vfi_{\Omega'}(x_\mu)+\mu^m|D\vfi_{\Omega'}(x_\mu)|^m\nonumber\\
&\leq&\mu(-\Delta \vfi_{\Omega'}(x_\mu)+|D\vfi_{\Omega'}(x_\mu)|^m)\nonumber\\&=& \mu f(x_\mu)+\mu c_{\Omega'}
\end{eqnarray} and
\begin{equation}\label{eq2}
\quad -\Delta \vfi_\Omega(x_\mu)+|D\vfi_\Omega(x_\mu)|^m=f(x_\mu)+c_\Omega.
\end{equation}
By subtracting (\ref{eq2}) from (\ref{eq1}) and using the following properties: $D(\mu\vfi_{\Omega'})(x_\mu)=D\vfi_{\Omega}(x_\mu)$ and $\Delta (\mu\vfi_{\Omega'}-\vfi_{\Omega})(x_\mu)\leq0,$ one gets:
$c_{\Omega}\leq \mu c_{\Omega'}+(\mu-1) f(x_\mu)$ and (\ref{ergodicestimates}) follows by sending $\mu$ to $1.$
\vskip0.3em
\noindent (ii) Let $\Omega'$ be such that $\Omega\subset\subset\Omega'$; from (\ref{ergodicestimates}) it follows that $c_{\Omega}\leq c_{\Omega'}.$ Moreover, we would like to show that for this case, we have: $c_{\Omega}< c_{\Omega'};$ to do so, we assume the contrary by setting $c_{\Omega}=c_{\Omega'}.$ Using the boundedness of $\varphi_{\Omega'}$ on $\omegb$ and behavior of $\varphi_{\Omega}$ near $\domeg,$ it is easy to see that $\varphi_{\Omega'}-\varphi_{\Omega}\to-\infty$ on $\domeg,$ meaning that $\varphi_{\Omega'}-\varphi_{\Omega}$ achieves its global maximum on $\omegb$ at some $\xb$ inside $ \Omega.$ On the other hand,  the convexity of $p \mapsto |p|^m$ yields $$ |D \varphi_{\Omega'}|^m \geq  |D \varphi_{\Omega}|^m + m |D \varphi_{\Omega}|^{m-2}D \varphi_{\Omega}\cdot (D (\varphi_{\Omega'}-\varphi_{\Omega})).$$
With this argument and using the local bounds on $|D\varphi_{\Omega} |$ in (\ref{gradientbound}), we have that $\varphi_{\Omega'}-\varphi_{\Omega}$ solves \begin{equation}\label{linearization}
-\Delta(\varphi_{\Omega'}-\varphi_{\Omega})-C(x) |D(\varphi_{\Omega'}-\varphi_{\Omega})|\leq0\ \hbox{ in }\Omega
\end{equation} for some  $C(x)>0$ which is   bounded in any compact subset.  Applying  the Strong Maximum Principle (see \cite[Lemma 2.1]{TTT}) we find that $\varphi_{\Omega'}-\varphi_{\Omega}$ is constant in $\omegb$ which clearly leads to a contradiction since $\varphi_{\Omega'}-\varphi_{\Omega}$ blows up on the boundary $\domeg,$ whence (\ref{ergodicestimates1}) holds.

\vskip0.3em
\noindent(iii) Now, we turn to the proof of the continuous dependence of $c$ and $\vfi$ in $\Omega,$ namely (\ref{dependence}). Let $0<\eta<\eta'<1,$ since $\Omega+B(0,\eta)$ is a strict subset of $\Omega+B(0,\eta')$,  we find from (\ref{ergodicestimates1}) that $c_{\eta}< c_{\eta'},$ meaning that the sequence $(c_\eta)_{0<\eta<1}$ decreases as $\eta$ goes to $0$. On the other hand, knowing that $\Omega\subset\Omega+B(0,\eta)$ for any $0<\eta<1,$ again by (\ref{ergodicestimates}), we obtain $c_\Omega<c_\eta.$ Therefore, $(c_\eta)_{0<\eta<1}$ is convergent in $\R$ and $c_\eta\downarrow c_0=c_\Omega $ as $\eta\to0^+$ by using the arguments on stability of (\ref{vhje-sub}) and the uniqueness of the ergodic constant.
\end{proof}
\vskip0.5em
Hereafter, to stress on the dependence of the initial boundary value problem (\ref{vhje})-(\ref{vhje-id})-(\ref{vhje-bc}) on $f$, $u_0$ and  $g$,  we denote it by $E(\Omega,f,g,u_0)$ where \lq\lq$E$\rq\rq\ stands  for Evolution. Likewise, we denote by $S(\Omega,f,g),$ the boundary value problem (\ref{vhje-s})-(\ref{vhjes-bc})  where \lq\lq $S$\rq\rq\ stands for Stationary.

Now, we link the problems (\ref{vhje-sub}) and $S(\Omega,f,g)$ by pointing out that the existence of a solution for $S(\Omega,f,g)$ depends on the ergodic constant $c$ in (\ref{vhje-sub}).
\begin{prop}\label{existencecondition} Let $c$ be the ergodic constant associated to (\ref{vhje-sub}) and let us denote by $\mathcal{S}$ the set of all $\la\in \R$ such that there exists a viscosity subsolution $\phi\in C(\overline\Omega)$ of
\begin{equation}\label{additive-equation}
-\Delta \phi+|D \phi|^m \leq  f+\la \quad\hbox{ in } \Omega.
\end{equation}  Then
\begin{equation}\label{const-ergodic}
c=\inf\ \{\la:\la\in\mathcal{S}\}\;
\end{equation} and the infimum in (\ref{const-ergodic}) is not attained.
Moreover, a necessary and sufficient condition for $S(\Omega,f,g)$ to have a viscosity solution is that $c<0.$
\end{prop}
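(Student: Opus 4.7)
The plan is to separate the proposition into two claims---the identity $c=\inf\mathcal{S}$ with non-attainment, and the solvability criterion $c<0$ for $S(\Omega,f,g)$---and to exploit the stability results on $c$ already proved in Proposition~\ref{ergodicdependence}. For the ``$\leq$'' part of the identity, I would fix any $\lambda>c$ and invoke Proposition~\ref{ergodicdependence}(iii) to pick $\eta>0$ with $c_\eta<\lambda$, where $c_\eta$ is the ergodic constant of (\ref{vhje-sub}) on $\Omega+B(0,\eta)$ (having extended $f$ continuously to a neighborhood of $\omegb$, e.g.\ by Tietze). The associated ergodic solution $\varphi_\eta$ is smooth and uniformly bounded on $\omegb\subset\subset\Omega+B(0,\eta)$, and
$$-\Delta\varphi_\eta+|D\varphi_\eta|^m=f+c_\eta\leq f+\lambda\quad\hbox{in }\Omega,$$
so $\varphi_\eta\in C(\omegb)$ is a classical, hence viscosity, subsolution of (\ref{additive-equation}), proving $\lambda\in\mathcal{S}$.

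For the non-attainment ($c\notin\mathcal{S}$), I would argue by contradiction. Assume $\phi\in C(\omegb)$ is a viscosity subsolution of (\ref{additive-equation}) with $\lambda=c$ and let $\varphi$ be an ergodic solution of (\ref{vhje-sub}) given by Theorem~\ref{limits}. Since $\varphi(x)\to+\infty$ as $\dd(x)\to 0$ while $\phi$ is bounded on $\omegb$, the function $w:=\phi-\varphi$ tends to $-\infty$ on $\domeg$ and attains its maximum at some interior point. Following the convexity argument already used in the proof of Proposition~\ref{ergodicdependence}(ii) (convexity of $p\mapsto|p|^m$ combined with the local gradient bound (\ref{gradientbound})), $w$ is a viscosity subsolution of a linear inequality of the form
$$-\Delta w-C(x)|Dw|\leq 0\quad\hbox{in }\Omega,$$
with $C$ locally bounded, and the Strong Maximum Principle in the form of \cite[Lemma~2.1]{TTT} forces $w$ to be constant in $\Omega$, contradicting $w\to-\infty$ on $\domeg$. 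This is the main obstacle, since the linearization and the pointwise comparison must be made rigorous when $\phi$ is only continuous; the standard remedy is to work with a sup-convolution of $\phi$, which is semiconvex and twice differentiable almost everywhere by Aleksandrov's theorem, apply the linearization and the SMP at that level, and pass to the limit in the sup-convolution parameter.

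Finally, for the equivalence involving $S(\Omega,f,g)$, the ``only if'' direction is immediate, since a viscosity solution of $S(\Omega,f,g)$ is a continuous subsolution of (\ref{additive-equation}) with $\lambda=0$, forcing $0\in\mathcal{S}$ and hence $c<0$ by the non-attainment already established. For the ``if'' direction, assume $c<0$, so that $0\in\mathcal{S}$ and a bounded continuous subsolution $\phi$ of $-\Delta\phi+|D\phi|^m\leq f$ exists; replacing $\phi$ by $\phi-\max_{\omegb}(\phi-g)$ ensures $\phi\leq g$ on $\domeg$. As a supersolution I would take $\Phi(x):=A-\tfrac{K}{2}|x|^2$ with $K:=\|f\|_\infty/N$ and $A$ large enough that $\Phi\geq g$ on $\domeg$; a direct computation yields $-\Delta\Phi+|D\Phi|^m=KN+K^m|x|^m\geq\|f\|_\infty\geq f$ in $\Omega$. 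Perron's method between $\phi$ and $\Phi$, combined with the comparison principle for (\ref{vhje-s}) and the standard local barriers available from the $C^2$ regularity of $\domeg$, then produces a viscosity solution of $S(\Omega,f,g)$ with the boundary datum $g$ attained classically.
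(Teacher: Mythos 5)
Your proposal follows essentially the same route as the paper (stability of $c_\eta$ under fattening $\Omega$, the SMP/linearization argument for non-attainment, and Perron's method for solvability), but there are two places where the argument as written is incomplete and worth tightening.

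First, you establish that every $\lambda>c$ lies in $\mathcal{S}$ and that $c\notin\mathcal{S}$, and then conclude $\inf\mathcal{S}=c$. Those two facts alone do not give the identity: you also need to rule out elements of $\mathcal{S}$ strictly below $c$. The paper handles this directly by showing $c\leq\lambda$ for every $\lambda\in\mathcal{S}$ via a maximum-point argument on $\phi_\lambda-\varphi$, exactly as in Proposition~\ref{ergodicdependence}(i). You can recover this from your own non-attainment argument by observing that $\mathcal{S}$ is upward closed (if $\phi$ is a subsolution for $f+\lambda$ and $\mu\geq\lambda$, it is also a subsolution for $f+\mu$), so any $\lambda<c$ in $\mathcal{S}$ would force $c\in\mathcal{S}$, contradicting non-attainment. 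That observation should be made explicit; as stated the conclusion does not follow.

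Second, in the ``if'' direction of the solvability criterion you take for granted that Perron's method with a bounded subsolution $\phi$ (for $\lambda=0$) and the concave barrier $\Phi$ yields a solution of the Dirichlet problem that attains $g$ on $\partial\Omega$. The delicate point is the comparison principle for $S(\Omega,f,g)$: the paper invokes \cite[Theorem~2.3]{TTT}, which requires a \emph{strict} subsolution. A subsolution for $\lambda=0$ is in general not strict. Since $c<0$, the cleanest fix is to take a subsolution for some $\lambda\in(c,0)$ (e.g. $\lambda=c/2$, which belongs to $\mathcal{S}$ by the first part), or to use $\varphi_\eta$ with $c_\eta<0$ as the paper does; either choice gives the required strict subsolution, after which Perron and comparison run as in the paper. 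Your caveat about linearizing at a merely continuous $\phi$ via sup-convolution is a reasonable way to make the SMP step rigorous; the paper simply cites \cite[Lemma~2.1]{TTT} for a viscosity-solutions version of the strong maximum principle, which absorbs that technicality.
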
  
\begin{remark} It is worth mentioning that by a viscosity solution of $S(\Omega,f,g),$ we mean a $C(\omegb)$- function satisfying (\ref{vhje-s}) in the viscosity sense and (\ref{vhjes-bc}) pointwisely. Indeed, when $1<m\leq 2,$ there is no loss of boundary conditions as specified in \cite[Propositions 3.1 and 3.2]{BDL}, and   $S(\Omega,f,g)$ is   a classical Dirichlet problem.

\end{remark}
\begin{proof}[\bf Proof of Proposition \ref{existencecondition}]  In order to prove (\ref{const-ergodic}), we first remark that $\mathcal{S}\neq \emptyset;$ indeed, it is easy to see that $\|f\|_\infty\in \mathcal{S}$ because $\varphi\equiv 0$ is a subsolution of 
$$  -\Delta \varphi +|D \varphi |^m \leq  f+\|f\|_\infty\quad\hbox{ in }\Omega.
$$ 
On the other hand, we pick any $\la\in \cal{S}$ and denote by $\phi_\la\in C(\omegb),$ the function satisfying $-\Delta \phi_\la+|D \phi_\la|^m \leq  f+\la$ in the viscosity sense. We argue as in the proof of Proposition~\ref{ergodicdependence}, by studying $\max_{\omegb}(\phi_{\la}-\varphi)$, in order to reach the conclusion that $c\leq\la.$ Therefore, the right hand side of (\ref{const-ergodic}), which we denote by $\la^*,$ is well-defined and $c\leq \la^*.$

Now, we assume that $c< \la^*,$ by keeping the notations of Proposition~\ref{ergodicdependence}-(iii), there exists some $0<\eta_0\ll1$ such that, for any $0<\eta\leq \eta_0,$ we have $c_\eta\in ]c,\frac{c+\la^*}{2}[.$ But, if $\varphi_{\eta}$ solves
 $$-\Delta \varphi_{\eta}+|D \varphi_{\eta}|^m =  f+c_\eta\quad\hbox{ in }\Omega_\eta:=\Omega+B(0,\eta)$$ then $\varphi_{\eta}$ is in $W^{2,p}_{loc} (\Omega_\eta)$ for any $p>1$ and therefore $\varphi_{\eta}\in C(\omegb)$; thus $\varphi_{\eta}$ is a viscosity subsolution of the equation associated to $c_\eta$ in $\Omega$ and $c_\eta\in \cal{S}.$ This clearly is a contradiction since $c_\eta<\la^*$ and we conclude that $c=\la^*$ and then (\ref{const-ergodic}) holds.

Moreover, the infimum in (\ref{const-ergodic}) is not attained: indeed, if $\varphi_c \in C(\omegb)$ is a subsolution of the $c$-equation (\ref{additive-equation}) and if $\vfi$ is a solution of the ergodic problem (\ref{vhje-sub}), then the $\max_{\omegb}\,(\varphi_c - \varphi)$ is achieved at some point of $\Omega$ since $\vfi$ blows up on $\domeg$ and applying the Strong Maximum Principle exactly as in the proof of Proposition~\ref{ergodicdependence} (ii), we find that $\varphi_c - \varphi$ is constant in $\Omega$, a contradiction since $\varphi_c \in C(\omegb)$ and $\vfi(x) \to +\infty$ as $x \to \domeg$.

Now we turn to the proof of the second part of the result. If the generalized Dirichlet problem $S(\Omega,f,g)$ has a  bounded viscosity solution, then $0 \in \mathcal{S}$, and then the first part implies that necessarily $c<0$.

Conversely, we assume that $c<0$ and prove that $S(\Omega,f,g)$ has a unique bounded viscosity solution. To do so, we are going to apply the Perron's method (cf. \cite{Ishii}, \cite{CIL} and \cite{DaLio0}) and in order to do it, we have to build sub and supersolution for $S(\Omega,f,g)$.

For the subsolution, since $c<0,$ we find from (\ref{dependence}) (see Proposition~\ref{ergodicdependence}), the existence of $0<\eta_0<1$ such that $c_\eta\leq0$ for all $\eta\leq\eta_0$ where $c_\eta$ is such that there exists a function $\varphi_\eta$ which is a viscosity solution of the ergodic problem (\ref{vhje-sub}) in $\Omega+B(0,\eta)$ for all $\eta\leq \eta_0.$ 

Since $g$ is bounded on $\domeg$ whereas $\vfi_\eta$ is bounded in $\omegb,$ there exists $K>0$ such that $\vfi_\eta-K \leq g$ on $\domeg$. Therefore we have, at the same time, a subsolution of $S(\Omega,f,g)$ required in the Perron's method, but also a strict subsolution of $S(\Omega,f,g)$ which implies a comparison result for $S(\Omega,f,g)$ (See \cite[Theorem 2.3]{TTT}).

For the supersolution, it is easy to see that $l(x)=|x-x_{0}|^{2}+ \|g\|_{\infty}+1$ is a supersolution of $S(\Omega,f,g)$ for some $x_0\in \mathbb{R}^N$ such that $B(x_{0},(\|f\|_{\infty}+2N)^{1/m})\cap \overline{\Omega}=\emptyset.$ Using the (strict) subsolution $\vfi_\eta-K$ built above, the existence of the solution therefore follows by combining the comparison result and the classical Perron's method.
\end{proof}

\vskip0.5em

We end this section by giving an example showing that the stationary Dirichlet Problem $S(\Omega,f,g)$ does not always have a solution. 
\begin{example}\label{nonsolvable2}\rm We fix some $R>0$ and consider the one-dimensional equation  
\begin{equation}\label{example}
-\eta''+|\eta'|^m=-C^m \ \ \ \hbox{ in } \ \ \ (-R,R)
\end{equation} with $C>0.$ If $\eta$ solves (\ref{example}), then after some easy change of variable and computations, we find that 
$$\dfrac{1}{C^{m-1}}\int_{\frac{\eta'(0)}{C}}^{\frac{\eta'(x)}{C}}\dfrac{ds}{|s|^m+1}=x.$$ It follows that
$$C^{m-1}x=\int_{\frac{\eta'(0)}{C}}^{\frac{\eta'(x)}{C}}
\dfrac{ds}{|s|^m+1}\leq\int_{-\infty}^{+\infty}
\dfrac{ds}{|s|^m+1}:= K.$$ Therefore, letting $x\rightarrow R,$ we obtain :
\begin{equation}\label{example*}
C^{m-1}\leq \dfrac{K}{R}
\end{equation}
Since  $R>0$ is fixed and $m>1,$ by choosing $C>0$ large enough, we find that the inequality (\ref{example*}) cannot hold and we conclude that the ordinary differential equation (\ref{example}) is not solvable for large $C.$ Therefore, (\ref{vhje-s})-(\ref{vhjes-bc}) is not always solvable as specified above when we considered $f:=-C^m<0.$ \hfill$\Box$
\end{example}
\vskip0.5em

\section{Asymptotic Behavior for the Parabolic Problem}\label{conv}
This section is devoted to the   description of the asymptotic behavior  of the  solution $u$ of the initial boundary-value problem $E(\Omega,f,g,u_0)$.

\subsection{Convergence to the solution of the  Stationary Dirichlet Problem}\label{stationaryconvergence}

We start with  the case that $u$ converges to the solution of the stationary Dirichlet problem. The main result of this subsection is the following.

\begin{theor}\label{convergence1} Let $1<m\leq 2,$ $f\in C(\overline{\Omega}),$ $u_0\in C(\overline{\Omega})$ and $g\in C(\partial\Omega).$ 
Assume that $S(\Omega,f,g)$ has a unique viscosity solution which we denote by $u_\infty.$ Let $u$ be the unique continuous viscosity solution of $E(\Omega,f,g,u_0).$ Then, as  $t\to+\infty,$
\begin{equation}\label{stationarybehavior}
u(x,t)\to u_\infty(x) \quad\hbox{ uniformly for all } x\in \overline{\Omega}.  
\end{equation}
\end{theor}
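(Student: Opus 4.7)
The strategy is to combine a time-uniform $L^\infty$ bound on $u$ with a half-relaxed limits argument that reduces the convergence statement to the comparison principle already established for the stationary problem $S(\Omega, f, g)$.

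\textbf{Step 1: Uniform bound in time.} The function $u_\infty$ is itself a time-independent viscosity solution of \rife{vhje} and satisfies $u_\infty = g$ on $\domeg$. Setting $M := \|u_0 - u_\infty\|_{L^\infty(\omegb)}$, the functions $u_\infty \pm M$ are, respectively, super- and sub-solutions of $E(\Omega,f,g,u_0)$ with the correct ordering of the boundary and initial data. The standard parabolic comparison for viscosity solutions (cf. \cite{BDL}) yields
\begin{equation*}
u_\infty(x) - M \;\leq\; u(x,t) \;\leq\; u_\infty(x) + M \qquad \forall (x,t) \in \omegb \times [0, +\infty),
\end{equation*}
so that $u$ is bounded on $\omegb \times [0, +\infty)$, independently of $t$.

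\textbf{Step 2: Half-relaxed limits.} Define, for $x \in \omegb$,
\begin{equation*}
\ou(x) := \limsup_{\substack{y\to x\\ t\to +\infty}} u(y,t), \qquad
\underline{u}(x) := \liminf_{\substack{y\to x\\ t\to +\infty}} u(y,t).
\end{equation*}
These are finite by Step 1, and upper, respectively lower, semicontinuous on $\omegb$. By the classical stability of viscosity solutions applied to the time shifts $u_n(x,t) := u(x, t+n)$, $\ou$ is a viscosity subsolution and $\underline{u}$ a viscosity supersolution of the stationary equation \rife{vhje-s} in $\Omega$.

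\textbf{Step 3: Boundary values.} The delicate point is to show $\ou = \underline{u} = g$ on $\domeg$. Since $1 < m \leq 2$, the boundary condition is attained in the classical sense for both the evolution problem and $S(\Omega, f, g)$ (there is no loss of boundary condition, cf. \cite{BDL}). I would build, near any $x_0 \in \domeg$, local sub- and supersolutions of $E(\Omega,f,g,u_0)$ of the form $g(x_0) \pm \bigl( \omega(|x-x_0|) + K \dd(x) \bigr)$, with $\omega$ a modulus of continuity of $g$ and $K$ large enough both to dominate the gradient term $|Du|^m$ and to enforce the correct inequality with $u_0$ at $t=0$. These time-independent barriers pinch $u(x,t)$ uniformly in $t$ near $x_0$ and hence force $\ou(x_0) = \underline{u}(x_0) = g(x_0)$.

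\textbf{Step 4: Conclusion.} By Proposition~\ref{existencecondition}, the solvability of $S(\Omega,f,g)$ corresponds exactly to $c < 0$, and the proof of that same proposition provides a strict subsolution of $S(\Omega,f,g)$, which in turn yields a comparison principle for bounded sub- and supersolutions of $S(\Omega,f,g)$ (as invoked there via \cite[Theorem 2.3]{TTT}). Applying this comparison to the pair $(\ou, u_\infty)$, and to the pair $(u_\infty, \underline{u})$, both of which satisfy the correct inequality $\leq g$, resp. $\geq g$, on $\domeg$ by Step 3, one obtains $\ou \leq u_\infty \leq \underline{u}$ on $\omegb$. Since $\underline{u} \leq \ou$ by definition, we conclude $\ou = \underline{u} = u_\infty$ on $\omegb$. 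A standard compactness argument on $\omegb$, combining the semicontinuity of $\ou, \underline{u}$, the continuity of $u_\infty$, and the boundedness of $u$, then upgrades this pointwise identity to the uniform convergence \rife{stationarybehavior}.

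\textbf{Expected main obstacle.} The principal difficulty lies in Step 3: constructing time-uniform barriers near $\domeg$ that absorb the subquadratic gradient term. The hypothesis $1<m\leq 2$ is precisely what makes this possible, since no boundary layer forms and the Dirichlet condition is attained classically. Once the boundary values of the half-relaxed limits are identified with $g$, Steps 1, 2 and 4 are routine applications of viscosity-solution machinery and of the comparison result for $S(\Omega,f,g)$ already built in Section~\ref{ergodicproblem}.
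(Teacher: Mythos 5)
Your overall strategy — uniform $L^\infty$ bound (Step~1), half-relaxed limits (Step~2), comparison for $S(\Omega,f,g)$ via the strict subsolution coming from $c<0$ (Step~4) — coincides with the paper's. The divergence is Step~3, and here the paper takes a shorter route: it does \emph{not} identify $\ou$ and $\underline u$ with $g$ on $\domeg$ first. It only records the one-sided inequalities $\ou\geq g$, $\underline u\leq g$ (which are immediate from $u=g$ on $\domeg\times(0,\infty)$) and then directly applies the comparison result for the \emph{generalized} Dirichlet problem (\cite[Theorem 2.3]{TTT}, usable because $c<0$ provides a strict subsolution), together with the no-loss-of-boundary-condition statement of \cite[Propositions 3.1 and 3.2]{BDL}, to conclude $\ou\leq\underline u$ on $\omegb$. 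Your plan, by contrast, is essentially to re-prove the relevant part of the BDL no-loss result for the half-relaxed limits, which is a legitimate and more self-contained alternative, but is also more work.

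Within that Step~3 as written there is a genuine gap. A \emph{time-independent} barrier $W(x)=g(x_0)\pm\bigl(\omega(|x-x_0|)+K\dd(x)\bigr)$ cannot ``enforce the correct inequality with $u_0$ at $t=0$'' by taking $K$ large: whatever $K$ is, $W(x_0)=g(x_0)$, while $u_0$ (and more generally $u(\cdot,T)$ at any later starting time $T$) need only agree with $g$ at the single point $x_0$ and can be of order $\|u\|_\infty$ on interior points arbitrarily close to $x_0$. So the comparison on a parabolic cylinder $\bigl(B(x_0,r)\cap\Omega\bigr)\times(T,T')$ fails at the bottom $\{t=T\}$. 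The standard repair is to use a \emph{time-dependent} barrier of the form $\widetilde W(x,t)=W(x)+2C e^{-\lambda (t-T)}$ with $C=\|u\|_{L^\infty(\omegb\times[0,\infty))}$ (available from your Step~1) and $\lambda>0$ small, absorbing the extra $-2\lambda C e^{-\lambda(t-T)}$ into the strict supersolution margin of $W$. This is how barriers near $\domeg$ are typically made time-uniform, and once you insert it, Step~3 closes and your Steps~4 and the final compactness upgrade are correct. Alternatively, you could simply invoke \cite[Propositions 3.1 and 3.2]{BDL} directly at the level of $\ou,\underline u$ and skip the hand-built barriers, which is effectively what the paper does.
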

\vskip0.5em
\begin{proof}[\bf Proof of Theorem \ref{convergence1}] 
We first notice that $u$ is uniformly bounded on $\overline{\Omega}\times[0,+\infty).$ Indeed, since $u_\infty$ solves $S(\Omega,f,g),$ it is 
straighforward that $u_\infty$ solves $E(\Omega,f,g,u_\infty)$ and by \cite[Corollary 2.1]{TTT}, one gets
$$\|u(x,t)-u_\infty\|_\infty\leq\|u_0-u_\infty\|_\infty .$$
Next, from the uniform boundedness of $u$ obtained above, we use the half-relaxed limits method to say that the functions $$\overline{u}(x)=\underset{\underset{t\to +\infty}{y\to x}}{\limsup}\ u(y,t)\quad\hbox{ and }\quad\underline{u}(x)=\underset{\underset{t\to +\infty}{y\to x}}{\liminf}\ u(y,t)$$ 
are respectively subsolution and supersolution of
(\ref{vhje-s})-(\ref{vhjes-bc}). By definition of the half limits, we have
$\underline{u}\leq\overline{u}$ on $\Omega$  but given that the ergodic constant $c$ is strictly negative (see Proposition \ref{existencecondition}), there exists a strict subsolution for $S(\Omega,f,g)$ and we can therefore apply \cite[Theorem 2.3]{TTT} to obtain $\overline{u}\leq\underline{u}$ on
$\omegb.$ It is worth noticing that since no loss of boundary condition could happen (see \cite[Propositions 3.1 and 3.2]{BDL}), the Dirichlet  condition (\ref{vhjes-bc}) is understood in the classical sense: $\underline{u}\leq g$ on $\partial\Omega$ and $\overline{u}\geq g$ on $\partial\Omega.$ 
Thus, we obtain $u_\infty=\underline{u}=\overline{u}$ on $\overline{\Omega},$ meaning that (\ref{stationarybehavior}) holds.

\end{proof} 

\vskip1em
\subsection{Convergence of $\ \dfrac{u(x,t)}{t}$}\label{bd-estimates}
In this subsection, we assume that $c\geq 0.$ Indeed, due to Proposition \ref{existencecondition}, the case $c<0$ is already described by Theorem \ref{convergence1}.  The main result of this section is the following convergence of $\frac{u(x,t)}{t}$ to $-c$ which always holds true, even if at different rates.

\begin{theor}\label{rate}
Let $1<m\leq 2$, $f\in W^{1,\infty}(\Omega),$ $u_0\in C(\overline{\Omega})$ and $g\in C(\partial\Omega).$  Let $u$ be the unique continuous viscosity solution of $E(\Omega,f,g,u_0).$ Then we have
$$
\frac{u(x,t)}t\to -c \quad\hbox{ locally uniformly on }  \Omega \quad\hbox{ as } t \to +\infty\,.
$$
In particular, for any compact set $K\subset \Omega$ there exists a constant $M_K$ such that, as  $t\to+\infty$,
\begin{itemize}

\item[(i)] if  $c>0$ then
\be\label{rate1}
\begin{cases}
\|\frac{u(x,t)}t + c\|_{C(K)}\leq \frac{M_K}t\,  &  \hbox{when $\frac32 <m\leq 2$}
\\
\m
\|\frac{u(x,t)}t + c\|_{C(K)}\leq M_K\, \frac{\log t}{t  }& \hbox{when $m=\frac32$}
\\
\m
\|\frac{u(x,t)}t + c\|_{C(K)}\leq \frac{M_K}{t^{\frac{m-1}{2-m} } }& \hbox{when $1<m< \frac32$}
\end{cases}
\ee

\item[(ii)] if $c=0$ then
\be\label{rate2}
\begin{cases}
\|\frac{u(x,t)}t\|_{C(K)}\leq M_K\, \frac{\log t}t\,  &  \hbox{when $m= 2$}
\\
\m
\|\frac{u(x,t)}t\|_{C(K)}\leq \frac{M_K}{t^{2-m } }& \hbox{when $1<m<  2$}.
\end{cases}\ee
\end{itemize}
\end{theor}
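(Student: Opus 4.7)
I would bracket $u(x,t)+ct$ on compact subsets of $\Omega$ between a super- and a sub-solution of $E(\Omega,f,g,u_0)$ built from the ergodic pair $(c,\varphi)$ provided by Theorem~\ref{limits}. For the upper bound, the function $V(x,t):=\varphi(x)-ct+C_0$ solves $V_t-\Delta V+|DV|^m=f$ identically in $\Omega\times(0,+\infty)$; since $\varphi\to+\infty$ on $\domeg$, one has $V\geq g$ on $\domeg\times(0,+\infty)$ automatically, and $C_0$ can be chosen large enough that $V(\cdot,0)\geq u_0$ in $\omegb$. The comparison principle for the parabolic Dirichlet problem --- which holds without loss of boundary conditions in the subquadratic regime, cf.\ \cite{BDL} --- then gives $u(x,t)+ct\leq\varphi(x)+C_0$, whence $u+ct\leq M_K$ on any compact $K\subset\Omega$. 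This already supplies an upper bound of order $M_K/t$ on $u/t+c$, sharper than every rate stated in (\ref{rate1}) and (\ref{rate2}).

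The lower bound is the main obstacle, and is where the dichotomy in $m$ and in the sign of $c$ appears. The natural candidate $\varphi(x)-ct$ cannot be compared with $u$ from below because it blows up on $\domeg$ while $u=g$ there; one must instead produce a genuine subsolution $w$ of $E(\Omega,f,g,u_0)$ with $w\leq g$ on $\domeg$, $w(\cdot,0)\leq u_0$ in $\omegb$, and $w(x,t)\geq -ct-\rho(t)-M_K$ on any compact $K\subset\Omega$, with $\rho(t)$ equal to $0$ when $3/2<m\leq 2$ and $c>0$, to $\log t$ when $m=3/2$ and $c>0$ (or $m=2,\;c=0$), to $t^{(3-2m)/(2-m)}$ when $1<m<3/2$ and $c>0$, and to $t^{m-1}$ when $1<m<2$ and $c=0$. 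The natural building blocks are the approximate ergodic functions $\varphi_\eta$ on $\Omega+B(0,\eta)$ of Proposition~\ref{ergodicdependence}, which are \emph{bounded} on $\omegb$ at the price of an ergodic constant $c_\eta>c$, together with a suitable time-dependent correction. The critical threshold $m=3/2$ enters because the gradient bound (\ref{gradientbound}) and the Hessian bound (\ref{Hessianfi2}) can be combined into an honest subsolution with bounded boundary values only when $\alpha=(2-m)/(m-1)<1$, i.e.\ when $m>3/2$; otherwise the residue must be absorbed by a time-dependent constant, producing exactly the rate $\rho(t)$. The concrete constructions are deferred to the Appendix, as announced in the introduction.

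Combining the two bounds yields $-ct-\rho(t)-M_K\leq u(x,t)\leq -ct+M_K$ on every compact $K\subset\Omega$; dividing by $t$ and sending $t\to+\infty$ then gives $u(x,t)/t\to -c$ locally uniformly in $\Omega$, together with the quantitative estimates (\ref{rate1}) and (\ref{rate2}). The chief technical obstruction is the lower bound: it must faithfully reflect the three regimes in $(m,c)$, and its proof reduces to the delicate subsolution construction carried out in the Appendix.
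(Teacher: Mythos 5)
Your overall plan --- bracket $u+ct$ between a supersolution built directly from $\varphi_0$ and a carefully constructed subsolution, then read off the rates on compact subsets --- does match the paper's structure: Theorem~\ref{rate} is deduced from the elementary bound (\ref{abovebounds}) together with the lower bound of Theorem~\ref{supersolution}, whose proof occupies the Appendix. Your upper bound ($V=\varphi-ct+C_0$, comparison, bounded above on compacts) is exactly the paper's.

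The gap is in the proposed lower bound. You suggest as building block the enlarged-domain ergodic functions $\varphi_\eta$ of Proposition~\ref{ergodicdependence}-(iii), which are bounded on $\overline\Omega$ at the cost of $c_\eta>c$. To turn this into a subsolution satisfying $w(x,t)\geq -ct-\rho(t)-M_K$ with the stated rates, you would need (a) a quantitative rate for $c_\eta-c$ as $\eta\downarrow 0$, so that the drift cost $(c_\eta-c)\,t$ can be balanced against the boundary cost $\sup_{\overline\Omega}\varphi_\eta\sim \eta^{-\alpha}$ by optimizing $\eta=\eta(t)$, and (b) control of the extra $\partial_t$-term introduced by letting $\eta$ depend on $t$. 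Neither ingredient is available: Proposition~\ref{ergodicdependence}-(iii) only provides the qualitative monotone convergence $c_\eta\downarrow c$, with no rate. The paper avoids both problems by keeping the ergodic constant and the ergodic solution fixed and instead composing $\varphi_0$ with a time-dependent inward shift,
$v(x,t)=\gamma(t)\,\varphi_0\bigl(x-\mu(t)\,n_k(x)\bigr)$ with $\gamma(t)\uparrow 1$, $\mu(t)\downarrow 0$, and $n_k$ a smoothed outward normal field. Every error term in $A(v)$ and on the parabolic boundary is then estimated from the explicit behavior of the fixed $\varphi_0$ near $\domeg$ --- (\ref{gradientbound}), (\ref{Hessianfi2}), (\ref{fi0}), (\ref{expansion4})--(\ref{expansion4'}) --- and the threshold $m=3/2$ for $c>0$ appears as the integrability threshold for $\mu(t)\sim t^{-1/\alpha}$, $\alpha=(2-m)/(m-1)$. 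Your heuristic that $\alpha<1$ is the critical condition points at the right exponent, but it does not by itself make the $\varphi_\eta$-route produce the stated rates, since the missing estimate on $c_\eta-c$ remains.

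One further small discrepancy: for $c=0$ and $1<m<2$ you announce $\rho(t)\sim t^{m-1}$ for the growth of $|u|$ on compacts, whereas the Appendix construction (Theorem~\ref{supersolution}-(ii)) produces $|u|\lesssim t^{2-m}$, which agrees with $t^{m-1}$ only at $m=3/2$.
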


\vskip0.5em
Theorem~\ref{rate} is a direct consequence of the following estimate. For technical reasons, it will be convenient   to consider  the unique solution $\vfi_0$ of (\ref{vhje-sub}) satisfying $\min_{\omegb}\vfi_0=0.$ In this way, as said above, any solution $\vfi$ of (\ref{vhje-sub}) is described as $\vfi=\vfi_0+k$ for some constant $k.$ 
\vskip0.5em
\begin{theor}\label{supersolution}
Let $\Omega$ be a domain of class $C^2$. Let $f\in W^{1,\infty}(\Omega)$.
Then we have, as $t\to + \infty$
\vskip0.5em

(i) If $c>0$   there exists a constant $M>0$ such that
\begin{equation}\label{tesi2}
\begin{cases}
u+ct\geq \ga(t)\vfi_0(x-\mu(t)n(x))- M& \hbox{if $\frac32<m\leq 2,$}
\\
u+ct\geq \ga(t)\vfi_0(x-\mu(t)n(x))- M\,\log t& \hbox{if $m=\frac32,$}
\\
u+ct\geq \ga(t)\vfi_0(x-\mu(t)n(x))- M \,t^{\frac{3-2m}{2-m}}& \hbox{if $1<m<\frac32,$}
\end{cases}
\end{equation}
where $n(x)$ is  a vector field such that $n(x)\cdot D d(x)<0$ and $\ga(t)$, $\mu(t)$ are positive continuous functions such that  $\ga(t)$  is increasing and $\ga(t)\uparrow 1$, $\mu(t)$ is decreasing and $\mu(t)\downarrow 0$ as $t\to \infty$. 

\vskip0.5em

(ii) If $c=0$  there exists a constant $M>0$ such that
\be\label{tesi3}
\begin{cases}
u\geq \ga(t)\vfi_0(x-\mu(t)n(x))- M\log t& \hbox{if $m=2$,}
\\
u\geq \ga(t)\vfi_0(x-\mu(t)n(x))- M \,t^{2-m}& \hbox{if $1<m<2$,}
\end{cases}
\ee
where $\ga(t)\uparrow 1$, $\mu(t)\downarrow 0$ as $t\to +\infty$.
\end{theor}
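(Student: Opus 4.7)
The plan is to construct, for each regime, an explicit subsolution of $E(\Omega,f,g,u_0)$ of the form
\[
w(x,t)=\gamma(t)\,\vfi_0\bigl(x-\mu(t)n(x)\bigr)-ct-K(t),
\]
where $n\in C^2(\overline\Omega;\R^N)$ is a vector field satisfying $n(x)\cdot D\dd(x)<0$, so that $y:=x-\mu(t)n(x)$ lies in $\Omega$ at distance at least $c_0\mu(t)$ from $\domeg$ and $\vfi_0(y)$ remains finite on $\overline\Omega$. The continuous functions $\gamma(t)\uparrow 1$, $\mu(t)\downarrow 0$ and the loss $K(t)$ are to be tuned to each regime. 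Once $w$ is shown to be a subsolution with $w(\cdot,0)\leq u_0$ on $\overline\Omega$ and $w\leq g$ on $\domeg\times(0,\infty)$, the comparison principle for $E(\Omega,f,g,u_0)$ (valid since no loss of boundary condition occurs in the subquadratic regime, see \cite{BDL}) forces $w\leq u$, which is exactly the asserted inequality with $M$ corresponding to the growth of $K(t)$.

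First I would compute $w_t-\Delta w+|Dw|^m$. Using the ergodic identity $-\Delta\vfi_0(y)+|D\vfi_0(y)|^m=f(y)+c$ and collecting the Jacobian-of-the-shift errors, which by Lemma~\ref{properties} are controlled by $C\mu(t)(|D^2\vfi_0(y)|+|D\vfi_0(y)|)\leq C\mu(t)|D\vfi_0(y)|^m+O(1)$, a direct calculation gives
\[
w_t-\Delta w+|Dw|^m-f(x)\leq \gamma'(t)\vfi_0(y)+C\mu(t)|D\vfi_0(y)|-\gamma(t)\bigl(1-\gamma(t)^{m-1}\bigr)|D\vfi_0(y)|^m+E(x,t)-K'(t),
\]
with $|E(x,t)|\leq C\mu(t)|D\vfi_0(y)|^m+O(1)$. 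Imposing $\mu(t)\leq c_1(1-\gamma(t))$ for a small $c_1>0$ absorbs $E$ into the negative term, and the subsolution inequality reduces to
\[
K'(t)\geq H(t):=\sup_{s\geq 0}\Bigl\{C_0\gamma'(t)\, s^{2-m}+C\mu(t)\, s-\tfrac{m-1}{2}(1-\gamma(t))\, s^m\Bigr\}+O(1),
\]
where the estimate $|\vfi_0|\leq C_0|D\vfi_0|^{2-m}$ from Lemma~\ref{properties} (with $\log|D\vfi_0|$ replacing it when $m=2$) has been used. A second constraint comes from enforcing $w\leq g$ on $\domeg\times(0,\infty)$: since $\vfi_0(y)\sim\mu(t)^{-(2-m)/(m-1)}$ on $\domeg$ (respectively $\log\mu(t)^{-1}$ if $m=2$) by Theorem~\ref{limits}, the inequality $K(t)+ct\geq \gamma(t)\vfi_0(y)-\|g\|_\infty$ must hold uniformly on $\domeg$.

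Setting $1-\gamma(t)=A/t^{\delta}$ and $\mu(t)=B/t^{\beta}$, the supremum in $H(t)$ is attained at $s_\ast\sim(\mu/(1-\gamma))^{1/(m-1)}$ and has magnitude of order $t^{(\delta-m\beta)/(m-1)}$, while the boundary constraint reads $K(t)+ct\geq C_1 t^{\beta(2-m)/(m-1)}$. The rates in \eqref{tesi2} emerge from the joint optimization. When $c>0$ and $m>3/2$ one has $(m-1)/(2-m)>1$, so $\beta$ can be chosen with $\beta(2-m)/(m-1)<1$: the $-ct$ budget absorbs the boundary growth while $\delta$ is tuned so that $H(t)$ is integrable, producing $K(t)$ bounded. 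At the critical $m=3/2$ the two constraints meet exactly at $\beta=\delta=1$, forcing $H(t)\sim 1/t$ and hence $K(t)\sim\log t$. For $1<m<3/2$ the boundary constraint pins $\beta=(m-1)/(2-m)$, and after optimizing $\delta$ the PDE side gives $K(t)\sim t^{(3-2m)/(2-m)}$. When $c=0$ the term $-ct$ is absent so the boundary constraint alone dictates the rate: $\beta=1$ for $m=2$ and $\beta=(m-1)/(2-m)$ for $1<m<2$, producing respectively $K(t)\sim\log t$ and $K(t)\sim t^{2-m}$ as in \eqref{tesi3}. Finally one enlarges $K(0)$ to ensure $w(\cdot,0)\leq u_0$ on $\overline\Omega$ and concludes by comparison. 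The main obstacle is precisely the case-by-case optimization: the threshold $m=3/2$ appears as the exponent at which the boundary blow-up rate $t^{\beta(2-m)/(m-1)}$ and the linear budget $ct$ become comparable, and extracting the sharp exponents requires the full pointwise estimates $|\vfi_0|\leq K_1|D\vfi_0|^{2-m}$ and $|D^2\vfi_0|\leq K_1|D\vfi_0|^m$ of Lemma~\ref{properties}, not merely the gradient bound of Theorem~\ref{limits}.
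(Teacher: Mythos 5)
Your plan is essentially the paper's own proof: construct the moving barrier $\ga(t)\vfi_0(x-\mu(t)n(x))-ct-K(t)$ from the ergodic solution, compute the parabolic operator acting on it using the estimates of Lemma~\ref{properties}, absorb the error terms by tying $1-\ga$ to $\mu$ (the paper takes $\ga^{m-1}=1-\la\mu$, which is your constraint $\mu\lesssim 1-\ga$ up to constants and is then used via Young's inequality to dominate the $\mu|D\vfi_0|$-type terms by the $-\ga(1-\ga^{m-1})|D\vfi_0|^m$ term), and then compare against $u$. The source of the threshold $m=3/2$ is exactly as you identify: the boundary blow-up $t^{\beta(2-m)/(m-1)}$ must stay under the budget $ct+K(t)$, while the interior loss is $K(t)\sim\int_0^t\mu$, and $\mu\in L^1(0,\infty)$ precisely when $\beta>1$, which is compatible with the boundary constraint $\beta\leq(m-1)/(2-m)$ iff $m>3/2$.

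One arithmetic slip should be corrected. In the case $c=0$, $1<m<2$, the two competing constraints are the boundary one, $K(t)\gtrsim t^{\beta(2-m)/(m-1)}$, and the interior one, $K(t)\gtrsim\int_0^t\mu\sim t^{1-\beta}$; they balance at $\beta=m-1$, not $\beta=(m-1)/(2-m)$ as you wrote (with your value the boundary exponent would be $1$ and you would only get $K(t)\sim t$, not $t^{2-m}$). With $\beta=m-1$ both sides give $t^{2-m}$, matching \eqref{tesi3} and the paper's explicit choice $\mu(t)=(\beta^{-(1+\alpha)}+\Lambda t)^{-1/(1+\alpha)}$ with $1+\alpha=1/(m-1)$. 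A similar small confusion occurs when you say ``$\delta$ is tuned so that $H(t)$ is integrable'': once $\mu\lesssim 1-\ga$ is imposed with equality of exponents (the optimal choice, since otherwise $(1-\ga)\|(f+c)^-\|_\infty$ becomes the dominant error), $H(t)\sim\mu(t)$ and integrability is governed by $\beta$, not $\delta$. Neither issue affects the final stated rates, which you recover correctly in all regimes, so these are sketch-level slips rather than gaps in the argument.
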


Knowing that $\varphi_0$ is nonnegative and solves (\ref{vhje-sub}), it is easy to observe that $\vfi_0+\|u_0\|_{L^\infty(\Omega)}-ct$  is always a super-solution  of (\ref{vhje})-(\ref{vhje-id})-(\ref{vhje-bc}). Therefore, by means of Strong Comparison Principle for $E(\Omega,f,g,u_0)$ (see \cite[Theorem 3.1]{BDL} or \cite[Theorem 2.1]{TTT}), we have for all $1<m\leq 2,$
\begin{equation}\label{abovebounds}
 u(x,t)+ct\leq \vfi_0(x)+\|u_0\|_{L^\infty(\Omega)} \quad\hbox{ in } \Omega\times [0,+\infty). 
\end{equation}
Theorem~\ref{supersolution} provides some estimates on $u(x,t)+ct$ when $c\geq0$ which can be  used to locally bound $u+ct$ from below in order to complement (\ref{abovebounds}).
  \vskip0.5em
We refer the reader to the Appendix for the proof of Theorem~\ref{supersolution} and turn to the

\vskip0.5em
\begin{proof}[\bf Proof of Theorem~\ref{rate}]
It follows from \rife{abovebounds} that, in any compact set $K$, we have  $\frac{u(x,t)}t+c\leq \frac {M_K}t$ for some constant $M_K$. On the other hand the estimate from below varies  according to the values of $m$ and whether $c=0$ or $c>0.$ Using the estimates \rife{tesi2} and \rife{tesi3} in Theorem \ref{supersolution} and taking into account that $\vfi_0$ is locally bounded, we immediately deduce \rife{rate1} and \rife{rate2} and in particular that $\frac{u(x,t)}t\to -c$ locally uniformly as $t\to +\infty$.

\end{proof}

In the case that $c>0$ and $1<m\leq \frac32$, or if $c=0$,   the rates of convergence given above cannot in general be improved, as we will see later (Theorem~\ref{subsolution}). 
\vskip1.0em

\subsection{Convergence to the Stationary Ergodic Problem when $\frac{3}{2}<m \leq 2$ and $c>0$}\label{ergodicconvergence}
The goal of this section is to describe the asymptotic behavior of the solution $u$ of the generalized initial boundary-value problem
(\ref{vhje})-(\ref{vhje-id})-(\ref{vhje-bc}) in connection with the stationary ergodic problem (\ref{vhje-sub}). 
Our main result is the following: 
\begin{theor}[Convergence result]\label{timeconvergence}\text{}\\
Let $f\in W^{1,\infty}(\Omega),$ $u_0\in C(\overline{\Omega})$ and $g\in C(\partial\Omega).$ Let $c \in \mathbb{R}$ and  $\varphi_0\in W_{loc}^{1,\infty}(\Omega)$ be the unique viscosity solution of the ergodic problem (\ref{vhje-sub}) such that $\ \min_\Omega \vfi_0=0.$ Let $u$ be the unique continuous viscosity solution of $E(\Omega,f,g,u_0).$ Assume that $c>0$ and $\frac{3}{2}<m\leq  2,$ then we have 
\begin{equation}\label{ergodic}
u(x,t)+ct\to \varphi_0(x) + C\quad\hbox{ locally uniformly in }  \Omega, \;\hbox{ as } t \to +\infty
\end{equation} for some constant $C$ depending on $\Omega,$ $f,$ $c,$ $u_0,$ and $g.$
\end{theor}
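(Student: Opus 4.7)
My plan is to apply the half-relaxed limits method to $w(x,t):=u(x,t)+ct$, to identify the relaxed limits via the uniqueness (up to an additive constant) of the ergodic problem \rife{vhje-sub}, and then to force the additive constants to coincide by an interior strong-maximum-principle argument applied to time-translated limits of $w$.

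Combining the upper bound \rife{abovebounds} with the first line of \rife{tesi2} in Theorem~\ref{supersolution} (valid since $\tfrac32<m\leq 2$ and $c>0$), I obtain, for $t$ large enough,
$$\gamma(t)\,\vfi_0(x-\mu(t)n(x))-M\;\leq\;w(x,t)\;\leq\;\vfi_0(x)+\|u_0\|_\infty\,.$$
Since $\gamma(t)\uparrow 1$, $\mu(t)\downarrow 0$, and $\vfi_0\in C(\Omega)$, the function $w-\vfi_0$ is locally uniformly bounded on $\Omega\times[T,+\infty)$; together with the interior gradient bounds of \cite[Theorem A.1]{LL}, the family $\{w(\cdot,t+\cdot)\}_{t\geq T}$ is locally equicontinuous on compact subsets of $\Omega\times\R$. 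Introducing
$$\overline{w}(x):=\limsup_{(y,t)\to(x,+\infty)}w(y,t),\qquad \underline{w}(x):=\liminf_{(y,t)\to(x,+\infty)}w(y,t),$$
standard stability of viscosity solutions yields that $\overline{w}$ (resp.\ $\underline{w}$) is a locally bounded subsolution (resp.\ supersolution) of $-\Delta\vfi+|D\vfi|^m=f+c$ in $\Omega$, and the lower bound forces $\underline{w}$ to blow up on $\domeg$ at the rate of $\vfi_0$ given by \rife{expansion4}--\rife{expansion4'}. Hence both $\overline{w}-\vfi_0$ and $\underline{w}-\vfi_0$ stay bounded up to $\domeg$, and I can repeat the convexity-plus-strong-maximum-principle argument of Proposition~\ref{ergodicdependence}(ii) applied to $\overline{w}-\vfi_0$ and $\vfi_0-\underline{w}$ (both attaining interior maxima by the boundary behavior) to conclude $\overline{w}=\vfi_0+C_1$ and $\underline{w}=\vfi_0+C_2$ for some constants $C_1\geq C_2$.

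The main obstacle is to show $C_1=C_2$. My approach: choose $x_*\in\Omega$ and $(y_n,t_n)\to(x_*,+\infty)$ realizing $w(y_n,t_n)\to\vfi_0(x_*)+C_1$, and extract from $W_n(x,s):=w(x,t_n+s)$ a subsequence converging locally uniformly on $\Omega\times\R$ to an eternal viscosity solution $W$ of $W_s-\Delta W+|DW|^m=f+c$ with $\vfi_0+C_2\leq W\leq\vfi_0+C_1$ and $W(x_*,0)=\vfi_0(x_*)+C_1$. A direct calculation using the equations and the convex subgradient inequality for $|p|^m$ shows that $Z:=W-\vfi_0$ is a viscosity subsolution of
$$Z_s-\Delta Z+m|D\vfi_0|^{m-2}D\vfi_0\cdot DZ\leq 0,$$
whose drift is locally bounded on $\Omega$ by \rife{gradientbound}. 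Since $Z\leq C_1$ with equality at $(x_*,0)$, the strong parabolic maximum principle applied backward in $s$ together with forward uniqueness then force $Z\equiv C_1$, i.e.\ $W\equiv\vfi_0+C_1$. The symmetric convex inequality evaluated at $DW$ makes $Z$ a supersolution of an analogous linear inequality, so the same argument applied along a sequence realizing the $\liminf$ yields an eternal limit identically $\vfi_0+C_2$. To bridge these two regimes, I would use the continuity of $t\mapsto w(x_*,t)$ and the intermediate value theorem to show that if $C_1>C_2$ then there is a third family $\tau_n\to+\infty$ with $w(x_*,\tau_n)\to\vfi_0(x_*)+\tfrac{C_1+C_2}{2}$; in the limit this would provide a bounded eternal solution strictly between $\vfi_0+C_2$ and $\vfi_0+C_1$ near $x_*$, and a finer contradiction based on the monotonicity of $s\mapsto\sup_{\overline B}(W^\sharp(\cdot,s)-\vfi_0)$ on suitable balls $\overline B\subset\Omega$ (obtained from the subsolution/supersolution structure) ultimately forces $C_1=C_2=:C$, giving \rife{ergodic}.

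The key difficulty in this last step is that $\vfi_0+C$ blows up on $\domeg$ and is \emph{not} a Dirichlet solution of $E(\Omega,f,g,u_0)$, so the usual global $L^\infty$-contraction between parabolic and stationary solutions is unavailable; the comparison must stay interior and relies essentially on the precise gradient bound \rife{gradientbound}. The restriction $m>3/2$ enters only through the first line of \rife{tesi2}, which is the unique regime where the defect between $u+ct$ and $\vfi_0(x-\mu(t)n(x))$ stays $O(1)$; outside this range the defect grows with $t$ (see \rife{rate1}--\rife{rate2}), which is precisely the mechanism producing the behaviors (ii)--(iii) announced in the introduction.
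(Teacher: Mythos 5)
Your overall strategy --- pass to half-relaxed limits, identify them with solutions of the ergodic problem via the strong maximum principle, and then force the two additive constants to coincide --- is a natural one, and you correctly isolate the role of \rife{abovebounds} together with the first line of \rife{tesi2} in keeping $w-\vfi_0$ locally bounded precisely when $m>3/2$ and $c>0$. However, there are two genuine gaps, and they occur exactly at the points where the paper's proof does its real work.

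First, you assert that $\overline{w}-\vfi_0$ and $\vfi_0-\underline{w}$ ``both attain interior maxima by the boundary behavior,'' but that is not automatic. The two-sided estimate only gives that these differences are \emph{bounded}; it does not exclude a sequence $x_k\to\domeg$ along which $(\overline{w}-\vfi_0)(x_k)$ approaches its supremum without the supremum being attained inside $\Omega$. The cases in which the interior maximum comes for free (as in Proposition~\ref{ergodicdependence}(ii)) are those where the difference tends to $-\infty$ on $\domeg$; here, on the contrary, $\overline{w}$ and $\vfi_0$ blow up at the same rate, so the difference is merely bounded. Without an interior maximum the elliptic strong maximum principle cannot be applied, and the conclusion $\overline{w}=\vfi_0+C_1$ is not justified. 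This is exactly the obstruction that the paper handles in Step~4 of Lemma~\ref{global-limits}, by building an explicit boundary barrier $\chi$, rewriting the subsolution property in terms of the degenerate operator $\mathcal{L}$ from \rife{L-op} with continuous coefficients, and performing a comparison on the thin collar $\Omega^\delta\times(t_0-\delta,t_0]$ to force the maximum point into the interior. Some analogue of this barrier argument is indispensable and is missing from your outline.

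Second, your bridging argument to prove $C_1=C_2$ is only sketched, and the assertion that $s\mapsto\sup_{\overline{B}}(W(\cdot,s)-\vfi_0)$ is monotone on interior balls is not valid: since $W-\vfi_0$ is a subsolution of a linear parabolic equation, the parabolic maximum principle on $\overline{B}\subset\Omega$ only controls the interior sup by the values on the \emph{parabolic} boundary of $\overline{B}\times(s_0,s)$, and the values on $\partial B$ may fluctuate in $s$. The paper's argument relies instead on the globally defined quantity $m(t)=\max_{\omegb}\bigl(w(\cdot,t)-\vfi_0\bigr)$, which \emph{is} non-increasing because the comparison is carried out on the whole of $\omegb$, where the blow-up of $\vfi_0$ dominates the boundary datum $g+ct$; this monotone Lyapunov functional, combined with the barrier of Step~4 to locate the max and with the time-H\"older estimate of Proposition~\ref{localholder}(ii) to pass from a subsequential limit to a full limit, is what replaces your IVT argument. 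Relatedly, the ``forward uniqueness'' you invoke to propagate $Z\equiv C_1$ from $s\leq 0$ to $s>0$ would require a comparison principle for two solutions that blow up on $\domeg$, which is itself not granted without a boundary analysis of its own.
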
 
Recalling that all solutions of problem (\ref{vhje-sub}) only differ by addition of a  constant, one can rephrase (\ref{ergodic}) saying that $u(x,t)+ct$ converges to  a solution of the ergodic problem  (\ref{vhje-sub}). We choose to represent all solutions as $\vfi_0+C$  in order to emphasize that there is precisely one constant $C$, depending on the data, which determines  the asymptotic limit of $u+ct-\vfi_0$. 

\vskip0.5em
We also notice that by combining (\ref{abovebounds}) and (\ref{tesi2}), it follows that
\begin{equation}\label{uniformbounds}
\gamma(t)\vfi_0(x-\mu(t)n(x))- M\leq u(x,t)+ct\leq \vfi_0(x)+\|u_0\|_{L^\infty(\Omega)}\quad\hbox{ in } \Omega\times [0,+\infty).
\end{equation} If $\lim\limits_{t\to+\infty}(u(x,t)+ct)$ exists, then by sending $t\to+\infty$ in (\ref{uniformbounds}), we obtain 
$$ \vfi_0(x)- M\leq \lim\limits_{t\to+\infty}(u(x,t)+ct)\leq \vfi_0(x)+\|u_0\|_{L^\infty(\Omega)}$$
{From} the blow-up behavior of $\varphi_0$ near the boundary, it obviously follows that 
\begin{equation}\label{blow-up}
\lim\limits_{t\to+\infty}(u(x,t)+ct)\to+\infty\quad\hbox{ as } x\to\domeg
\end{equation} which gives the expected behavior near the boundary since there is no loss of boundary condition for all $1<m\leq 2.$  

\vskip1.0em
The rest of this section is devoted to the proof of Theorem \ref{timeconvergence}. 
\vskip1.0em

Hereafter, for all $(x,t)\in \omegb\times[0,+\infty),$ we set 
\begin{equation*} 
v(x,t):=u(x,t)+ct \quad\hbox{ and }\quad w(x,t)=v(x,t)-\vfi_0(x).
\end{equation*}
The function $v(\cdot,\cdot + t)$ solves $E(\Omega,f+c,g+ct ,v(\cdot,t))$ whereas $\vfi_0$ is a supersolution of $E(\Omega,f+c,g+ct ,\vfi_0)$ since $\vfi_0$ solves (\ref{vhje-sub}). From the comparison principle, one gets, for all $x\in\overline{\Omega}$ and $s\geq t\geq 0,$ $$\max_{x\in\overline{\Omega}}w(x,s)\leq \max_{x\in\overline{\Omega}}w(x,t).$$ 
It follows that the function $t\mapsto m(t):=\max_{x\in\overline{\Omega}}w(x,t)$ is non-increasing. Moreover, as a by-product of (\ref{uniformbounds}), $m(t)$ is bounded. Therefore $m(t)\downarrow\overline{m}$ as $t\to+\infty.$ 

Now, since we want to deal with bounded functions and clearly $w(x,t) \to -\infty$ when $x \to \domeg$, we choose any constant $K>|\overline{m}|$ and set $$z(x,t):=\sup[w(x,t),-K].$$
We notice that, since $m(t)$ is bounded from below by $\overline{m}$, we still have 
$$m(t):=\max_{x\in\overline{\Omega}}w(x,t)=\max_{x\in\overline{\Omega}}[\sup(w(x,t),-K)]=\max_{x\in\overline{\Omega}}z(x,t)
\ \ \hbox{ for all }\ \  t\geq 0,$$ and from (\ref{uniformbounds}), $z$ is uniformly bounded on $\omegb\times[0,+\infty)$. Moreover, since $w(x,t)\to-\infty$ as $x\to\domeg$ for all $t\geq0,$ we have $z=-K$ on $\domeg\times[0,+\infty)$ and we also remark that $z$ is a viscosity solution of 
\begin{equation}\label{halfrelaxed3}
\left\{
\begin{array}{rl}
\phi_t(x,t)-\Delta \phi(x,t)+\mathcal{H}(x)\cdot D\phi(x,t)&\leq 0  \quad\quad\hbox{ in } \Omega\times(0,+\infty)\\
\phi(x,t)+K&=0  \quad\quad\hbox{ on } \domeg\times(0,+\infty)
\end{array}
\right. 
\end{equation} where
\be\label{calH}
\qquad \mathcal{H}(x)=m|D\vfi_0(x)|^{m-2}D\vfi_0(x).
\ee
Indeed, by the convexity of $p\mapsto |p|^m$, $w$ is a subsolution of the above equation and we recall that the maximum of two subsolutions is a subsolution.

In order to have an equation with continuous coefficients, we introduce the operator
\be\label{L-op}
\mathcal{L}\phi(x,t):=\dd(x)\phi_t(x,t)-\dd(x)\Delta \phi(x,t)+\dd(x)\mathcal{H}(x)\cdot D\phi(x,t)\, ,
\ee 
just obtained from the previous equation by multiplying by $\dd (x)$. From (\ref{pv}), it is easy to note that 
\be\label{calH'}
|D\vfi_0(x)|^{m-2}D\vfi_0(x)= -(m-1)^{-1}\frac{D\dd(x)}{\dd(x)}+o\left(\frac1{\dd(x)}\right)\quad\hbox{ as }\quad \dd(x)\to0.
\ee It is therefore obvious to see that $\cal{H}$ has a singularity on $\domeg$ whereas $\dd(x)\mathcal{H}(x)$ can be extended as a continuous function on $\omegb.$ Using this new operator, we have 
$$\mathcal{L} z \leq 0  \quad\quad\hbox{ in } \Omega\times(0,+\infty)\; ,$$
which replaces the subsolution property in (\ref{halfrelaxed3}).

To complete the proof of (\ref{ergodic}), we first give the following local H\"older continuity of the unique solution $v$ of $E(\Omega,f+c,g+ct,u_0),$ with respect to its $x$ and $t$ variables.
\begin{prop}\label{localholder}Let $1<m\leq  2$ and $v$ be the unique continuous viscosity solution of  $E(\Omega,f+c,g+ct,u_0).$ Then
\begin{itemize}
\item[(i)]  For all $\eta>0$ and for all $\nu\in (0,1),$ we have $v(\cdot,t)\in C^{0,\nu}_{loc}(\Omega)$ for all $t\geq \eta.$ Moreover, if $K_\delta : = \{y\in \Omega\, :\ \dd(y)\geq \delta\}$ for any $\delta>0$, then, for all $t\geq \eta,$ the $C^{0,\nu}$-norm of $v(\cdot,t)$ on $K_\delta$ depends only on $\nu$, $\delta$, $\eta$, $\|f\|_\infty$  and the $L^\infty$-norm of $v$ on $K_{\delta/2} \times[\frac{\eta}2,+\infty)$. 
\item[(ii)] For any $x\in\Omega,$ $v(x,\cdot)\in C^{0,\frac{\nu}{2}}_{loc}(0,+\infty)$ for any $\eta >0$. Moreover the $C^{0,\frac{\nu}{2}}$-norm of $v(x,\cdot)$ in $[\eta,+\infty)$ depends only on $\nu,$ $\eta$, $\|f\|_\infty$ and the $L^\infty$-norm of $v$ on $K_{\dd(x)/2} \times[\eta/2,+\infty)$. 
\end{itemize}
\end{prop}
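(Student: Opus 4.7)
The plan is to prove (i) by the Ishii--Lions doubling-of-variables technique adapted to subquadratic Hamiltonians and to deduce (ii) from (i) by a barrier/comparison argument exploiting the parabolic scaling of the equation $\phi_t-\Delta\phi+|D\phi|^m=f+c$. The condition $m\leq 2$ ensures that in the doubling estimate the Hamiltonian contribution stays subdominant to the second-order one; the exponent $\nu/2$ in (ii) reflects the parabolic dilation $(x,t)\mapsto(\lambda x,\lambda^2 t)$.

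For (i), fix $\delta,\eta>0$, introduce a smooth cutoff $\chi\in C^2(K_{\delta/2})$ vanishing on $K_\delta$ and tending to $+\infty$ as $\dd(\cdot)\to \delta/2$, and for $L$ to be chosen large consider
\[
\Phi(x,y,s)=v(x,s)-v(y,s)-L|x-y|^\nu-\chi(x)-\chi(y)-\frac{\sigma}{s-\eta/2},
\]
with $\sigma>0$ small. The cutoff forces any positive maximum to lie in $K_{\delta/2}\times K_{\delta/2}$ and the time penalization forces the maximum time $\bar s$ to satisfy $\bar s>\eta/2$, so the parabolic Crandall--Ishii lemma applies at a maximum point $(\bar x,\bar y,\bar s)$. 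Subtracting the subsolution inequality for $v$ at $(\bar x,\bar s)$ from the supersolution inequality at $(\bar y,\bar s)$ the second-order terms yield a strongly negative contribution $\mathrm{tr}(X)-\mathrm{tr}(Y)\leq -c_\nu L|\bar x-\bar y|^{\nu-2}+C$ (coming from the concavity of $r\mapsto r^\nu$ in the Hessian of the penalty), while the Hamiltonian contribution $|p_y|^m-|p_x|^m$ is only of order $L^{m-1}|\bar x-\bar y|^{(\nu-1)(m-1)}$, since $p_x-p_y=D\chi(\bar x)+D\chi(\bar y)$ is bounded independently of $L$. Since $m\leq 2$ and $\nu<1$, the comparison $L|\bar x-\bar y|^{\nu-2}\gg L^{m-1}|\bar x-\bar y|^{(\nu-1)(m-1)}$ holds for $L$ large (one verifies that $\nu(2-m)+m-3<0$), so the second-order term dominates and one reaches a contradiction. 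The threshold $L$ depends only on $\nu,\delta,\eta,\|f\|_\infty$ and $\|v\|_{L^\infty(K_{\delta/2}\times[\eta/2,+\infty))}$; sending $\sigma\to 0$ yields the stated estimate.

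For (ii), once (i) is available, fix $x_0\in\Omega$, set $\delta=\dd(x_0)$, and build a classical supersolution barrier in the cylinder $B(x_0,\delta/4)\times[t_0,t_0+h]$ (with $h>0$ small) of the form
\[
\psi^+(y,s)=v(x_0,t_0)+L\bigl(|y-x_0|^2+(s-t_0)\bigr)^{\nu/2},
\]
where $L$ is chosen via the spatial Hölder bound of (i) at time $t_0$. Direct computation of $\psi^+_s$, $-\Delta\psi^+$, and $|D\psi^+|^m$ shows that, for $L$ large enough (depending on $\|f\|_\infty$ and $\delta$, using $m\leq 2$ to dominate the Hamiltonian by the time and second-order terms of the parabolic profile), $\psi^+$ is a classical supersolution in the interior, while on the parabolic boundary $\psi^+\geq v$ follows from (i) on the lateral part and is trivial on the bottom. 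Comparison for $E(\Omega,f+c,g+ct,u_0)$ gives $v(x_0,t_0+h)-v(x_0,t_0)\leq L h^{\nu/2}$, and a symmetric subsolution barrier yields the reverse inequality.

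The main technical obstacle is the book-keeping: every estimate must depend only on the local $L^\infty$ norm of $v$ and on $\|f\|_\infty$, with no dependence on $\|u_0\|_\infty$, $\|g\|_\infty$ or a finite time horizon. This is ensured by the penalization $\sigma/(s-\eta/2)$ in the doubling of (i) and by the purely interior nature of the barrier in (ii). The endpoint $m=2$ is the borderline of the argument and forces $\nu$ to stay strictly below $1$, which is precisely what the statement allows.
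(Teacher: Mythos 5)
Your proof of part (i) is essentially the paper's argument: an Ishii--Lions doubling of variables with an H\"older penalization $|x-y|^\nu$, where the key point is that (by $m\le 2$ and $\nu<1$) the Hamiltonian contribution $|D\vfi_{\mathrm{pen}}|^{m-1}\sim C^{m-1}|\bar x-\bar y|^{(\nu-1)(m-1)}$ is dominated by the second-order gain $C|\bar x-\bar y|^{\nu-2}$ for $C$ large. The paper localizes with the quadratic penalty $L(|x-x_0|^2+|t-t_0|^2)$ rather than your cutoff $\chi$ plus singular time penalty $\sigma/(s-\eta/2)$, but these are interchangeable; the exponent book-keeping and the conclusion are the same.

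Part (ii) has a genuine gap. The paper does not reprove time regularity but cites \cite[Lemma 9.1]{BBL}, while you propose a barrier. The barrier you wrote down,
\[
\psi^+(y,s)=v(x_0,t_0)+L\bigl(|y-x_0|^2+(s-t_0)\bigr)^{\nu/2},
\]
is in fact a strict \emph{sub}solution of the heat equation near the vertex and not a supersolution of the PDE, no matter how large $L$ is. Writing $u=|y-x_0|^2+(s-t_0)$ one computes
\[
\psi^+_s-\Delta\psi^+ \;=\; L\nu\,u^{\nu/2-2}\Bigl[\bigl(\tfrac{5}{2}-N-\nu\bigr)|y-x_0|^2+\bigl(\tfrac12-N\bigr)(s-t_0)\Bigr].
\]
The coefficient of $(s-t_0)$ is $\tfrac12-N<0$, and on the axis $y=x_0$ the bracket reduces to $(\tfrac12-N)(s-t_0)<0$, so $\psi^+_s-\Delta\psi^+\sim -L\nu(N-\tfrac12)(s-t_0)^{\nu/2-1}\to-\infty$ as $s\downarrow t_0$, while $|D\psi^+|^m$ vanishes there. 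Hence $\psi^+_s-\Delta\psi^++|D\psi^+|^m\to-\infty$ on the axis and $\psi^+$ cannot be a supersolution, and multiplying by a large constant $L$ only makes this worse since the heat part and the Hamiltonian part scale as $L$ and $L^m$ respectively but the heat part is the dominant (negative) one near the vertex. To salvage (ii) by a barrier one should instead use a quadratic-in-space profile in a cylinder of parabolic size, e.g.
\[
\psi^+(y,s)=v(x_0,t_0)+C r^\nu + B\,\frac{|y-x_0|^2}{r^2}+A(s-t_0)
\]
in $B(x_0,r)\times(t_0,t_0+h]$, which is a classical supersolution as soon as $A\ge 2NB/r^2+\|f\|_\infty+|c|$, choose $B$ so that the lateral boundary condition holds, and then iterate (a Gronwall-type bootstrap): writing $\omega(h)=\sup_{[t_0,t_0+h]}\bigl(v(x_0,\cdot)-v(x_0,t_0)\bigr)$ one obtains $\omega(h)\le C r^\nu+\tfrac{2N h}{r^2}\omega(h)+K_0 h$, and choosing $r^2=4Nh$ absorbs the $\omega(h)$ term and gives $\omega(h)\lesssim h^{\nu/2}$. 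As stated, your $\psi^+$ does not do the job, and the ``direct computation'' you invoke fails.
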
 We postpone the proof of Proposition \ref{localholder}-(i) to the Appendix and refer the reader to \cite[Lemma 9.1]{BBL} for the proof of Proposition \ref{localholder}-(ii).

\vskip0.5em  

In order to prove (\ref{ergodic}), we will use the following  
\begin{lemma}\label{global-limits} $z(x,t)\to \overline{m}$ locally uniformly in $\Omega$ as $t\to+\infty.$
\end{lemma}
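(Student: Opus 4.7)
The plan is to combine the local compactness provided by Proposition~\ref{localholder} with the Strong Maximum Principle for parabolic subsolutions, so as to identify the only possible asymptotic profile of $z$ as the constant $\overline{m}$.

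Given any sequence $t_n\to +\infty$, I would set $z_n(x,s):=z(x,t_n+s)$ for $(x,s)\in \overline\Omega\times \R$. Using \rife{uniformbounds}, the function $v$ is uniformly bounded on $K\times [0,+\infty)$ for every compact set $K\subset \Omega$, and hence so are $w=v-\vfi_0$ and $z=\sup(w,-K)$. Proposition~\ref{localholder} then yields that $(z_n)$ is locally equi-H\"older continuous on $\Omega\times\R$, so by Arzel\`a--Ascoli and a diagonal extraction there exists a bounded function $z_\infty$ such that, along a subsequence, $z_n\to z_\infty$ locally uniformly in $\Omega\times\R$. By stability of viscosity solutions, $z_\infty$ is a viscosity subsolution of
\[
\phi_t-\Delta \phi+\mathcal{H}(x)\cdot D\phi\leq 0 \qquad \text{in }\Omega\times\R,
\]
and since $z(\cdot,\tau)\leq m(\tau)\downarrow \overline m$, also $z_\infty\leq \overline m$ throughout $\Omega\times\R$.

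The crux of the argument is to prove that $z_\infty(x^*,s^*)=\overline m$ for some interior point $(x^*,s^*)\in\Omega\times\R$. For each $s\in\R$ the maximum $m(t_n+s)=z_n(x_n^s,s)$ is attained at some $x_n^s\in\overline\Omega$ and $z_n(x_n^s,s)\to \overline m$; since $z=-K<\overline m$ on $\partial\Omega$, each $x_n^s$ in fact lies in $\Omega$. I would claim that, for $n$ large and $s$ in any fixed bounded interval, $\dd(x_n^s)\geq \delta_0$ for a uniform $\delta_0>0$. Granting this ``non-escape'' claim, a further extraction gives $x_n^{s^*}\to x^*\in\Omega$, and the local uniform convergence together with $z_n(x_n^{s^*},s^*)\to \overline m$ yields $z_\infty(x^*,s^*)=\overline m$. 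Given the attainment, the Strong Maximum Principle applied in the connected open set $\Omega$ to the subsolution $z_\infty$ of the locally uniformly parabolic inequality above (the coefficient $\mathcal{H}$ is bounded on compact subsets of $\Omega$; cf.\ \cite[Lemma 2.1]{TTT}) forces $z_\infty\equiv \overline m$ on $\Omega\times(-\infty,s^*]$. Running the same argument starting from $t_n+T$ for arbitrary $T>0$ then gives $z_\infty\equiv \overline m$ on $\Omega\times\R$. Since every subsequential limit equals $\overline m$, the full family $z(\cdot,t)$ converges to $\overline m$ locally uniformly in $\Omega$ as $t\to +\infty$, which is the desired conclusion.

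The main obstacle I anticipate is precisely the non-escape claim on the maximizers $x_n^s$: the crude upper bound \rife{abovebounds} only gives $w\leq \|u_0\|_\infty$ and by itself does not prevent $\dd(x_n^s)\to 0$, which would leave $z_\infty$ strictly below $\overline m$ on every compact set. A sharper, time-uniform barrier near $\partial\Omega$ is needed, exploiting both the precise asymptotic expansion of $\vfi_0$ in Theorem~\ref{limits} and the continuity of the ergodic constant under domain perturbation in Proposition~\ref{ergodicdependence}, in order to ensure that $w(x,t)<\overline m-\epsilon$ on a thin tubular neighborhood of $\partial\Omega$ for all sufficiently large $t$.
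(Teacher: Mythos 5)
Your outline captures the right overall architecture (extract a subsequential limit, show it attains $\overline{m}$ somewhere inside $\Omega$, then propagate by the parabolic strong maximum principle), and you correctly single out the crux: ruling out that the near-maximizers escape to $\partial\Omega$. But you leave that crux unresolved, and the direction you sketch for closing it differs from what actually works in the paper and is not obviously realizable. A direct "time-uniform barrier showing $w(x,t)<\overline m-\epsilon$ on a tubular neighborhood of $\partial\Omega$ for all large $t$" would have to defeat the fact that $w$ is only known to satisfy the a priori bound \rife{uniformbounds}, which controls $w$ from above by $\|u_0\|_\infty$ but says nothing about a gap below $\overline m$ near $\partial\Omega$; in fact $\overline m$ could be close to $\|u_0\|_\infty$. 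Proposition~\ref{ergodicdependence} gives continuity of the ergodic constant, but translating that into a quantitative, $t$-uniform interior estimate on $w$ is a nontrivial additional step you do not supply.

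The paper avoids the non-escape issue altogether by not trying to keep the maximizers away from the boundary. Instead it passes to the half-relaxed upper limit $\tilde z$, which is a viscosity subsolution of a \emph{generalized Dirichlet problem}: the equation $\mathcal{L}\tilde z\leq 0$ in $\Omega$ (where $\mathcal{L}$ is obtained from $\partial_t-\Delta+\mathcal H\cdot D$ by multiplying by $\dd(x)$ so that the singular drift $\mathcal H$ becomes bounded up to $\partial\Omega$), together with the relaxed boundary inequality $\min\{\mathcal L\tilde z,\tilde z+K\}\leq 0$ on $\partial\Omega$ inherited from $z=-K$ there. Step 2 of the paper shows $\max_{\overline\Omega}\tilde z(\cdot,t_0)=\overline m$ for some $x_0\in\overline\Omega$, and Step 4 then argues by contradiction: if $x_0\in\partial\Omega$ and $\tilde z<\overline m$ in $\Omega\times(0,t_0]$, one builds an explicit barrier $\chi(x,t)=\overline m+k(e^{-d(x)}-1)-k(t-t_0)$ with $\mathcal L\chi>0$ in a thin shell $\Omega^\delta$, dominating $\tilde z$ on the parabolic boundary of $\Omega^\delta\times[t_0-\delta,t_0]$, so that comparison forces $\tilde z\le\chi$; at the touching point $(x_0,t_0)$ the subsolution boundary condition gives $\min\{\mathcal L\chi(x_0,t_0),\overline m+K\}\leq 0$, while $\mathcal L\chi(x_0,t_0)=km/(m-1)>0$ and $\overline m+K>0$, a contradiction. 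This is the mechanism that replaces your "non-escape" claim; without something equivalent (the degenerate operator $\mathcal L$ and the generalized boundary condition for the half-relaxed limit are both essential to make the barrier work), your argument has a genuine hole exactly where you predicted it would.
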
 Indeed, from Lemma~\ref{global-limits}, since $K>|\overline{m}|,$ it  easily follows that $w(x,t)\to \overline{m}$ locally uniformly in $\Omega$ as $t\to+\infty$, and we get (\ref{ergodic}) with $C=\overline m$. 
\vskip0.5em 
\begin{proof}[Proof of Lemma~\ref{global-limits}] We split it into several parts.\\
1. Let $\hat{x}\in \Omega$ and $r>0$ such that $\overline{B}(\hat{x},r)\subset\Omega.$ From Proposition~\ref{localholder}, it follows that there exists a sequence $t_n\to +\infty$ as $n\to+\infty$ and a function $\hat z$  such that
$$z(x,t_n)\to\hat{z}(x)\quad\hbox{ uniformly on } \overline{B}(\hat{x},r).$$  
We define $$z_n(x,t):=z(x,t+t_n)\quad \hbox{ for all }(x,t)\in \omegb\times(-t_n,+\infty)$$ and notice that $z_n$ is a viscosity solution of (\ref{halfrelaxed3}) on $\omegb\times(-t_n,+\infty).$ From the uniform boundedness of $z,$ we derive the one of $z_n$ and the half-relaxed 
limits method implies that $$\tilde{z}(x,t)=\underset{\underset{n\to +\infty} {(y,s)\to (x,t)}}\limsup\ z_n(y,s)$$ is a viscosity subsolution of the generalized Dirichlet problem
\begin{equation}\label{halfrelaxed4}
\left\{
\begin{array}{rl}
\mathcal{L}\tilde{z}(x,t)&\leq 0  \quad\quad\hbox{ in } \Omega\times(-\infty,+\infty)\\
\min\{\mathcal{L}\tilde{z}(x,t), \tilde{z}(x,t)+K\}&\leq 0  \quad\quad\hbox{ on } \domeg\times(-\infty,+\infty)
\end{array}
\right. 
\end{equation}

\vskip0.5em 
\noindent 2. We claim that $\underset{x\in\omegb}\max\ \tilde{z}(x,t)=\overline{m}.$ Indeed, on one hand, for all $n\in \mathbb{N},$ there exists $x_n\in \omegb$ such that $$z(x_n,t+t_n)=\underset{x\in\omegb}\max\ z(x,t+t_n)=\underset{x\in\omegb}\max\ z_n(x,t)\; .$$ 
From the compactness of $\omegb,$ we have,  up to subsequence,  $x_n\to\tilde{x}$ for some $\tilde x\in \omegb$. It follows that\begin{eqnarray}
\overline{m}=\underset{n\to+\infty}\lim m(t+t_n)&=&\underset{n\to+\infty}\lim [\underset{x\in\omegb}\max\ z(x,t+t_n)]\nonumber\\
&=&\underset{n\to+\infty}\lim z_n(x_n,t)\,\nonumber
\\
&\leq & \tilde{z}(\tilde{x},t) \leq \underset{x\in\omegb}\max\ \tilde{z}(x,t).\nonumber
\end{eqnarray} On the other hand, for all $x \in \omegb$, we have
\be
\tilde{z}(x,t)=\underset{\underset{n\to +\infty} {(y,s)\to (x,t)}}\limsup z_n(y,s)\leq\underset{(s,n)\to (t,+\infty)}\limsup m(s+t_n)=\overline{m}\nonumber
\ee and the claim is proved.

\vskip0.5em 
\noindent 3. Let  $t_0>0$ be fixed;  as a consequence of step 2, there exists a point $x_0\in\omegb$ such that
\begin{equation}\label{maxi}
\max_{x\in\omegb}\ \tilde{z}(x,t_0)=\tilde{z}(x_0,t_0)=\overline{m}.
\end{equation}
To end this proof, it is enough to prove that (for some $t_0>0$) $x_0$ lies inside $\Omega.$ 

Indeed, if $x_0\in\Omega$, then the Parabolic Strong Maximum Principle and (\ref{maxi}) imply
\begin{equation}\label{maxi'}
\tilde{z}=\overline{m}\quad\hbox{ in }  \Omega\times(-\infty,t_0]
\ee
Therefore, by taking (\ref{maxi'}) into account, for any $x\in \overline{B}(\hat{x},r),$ it would follow that 
\begin{eqnarray}
\overline{m}=\tilde{z}(x,0)&=&\underset{\underset{n\to +\infty} {(y,s)\to (x,0)}}\limsup\ z_n(y,s)\nonumber\\
&=&\underset{\underset{n\to +\infty} {(y,s)\to (x,0)}}\limsup\ z(y,s+t_n)\nonumber\\
&=&\underset{\underset{n\to +\infty} {(y,s)\to (x,0)}}\limsup\ [z(y,s+t_n)-z(y,t_n)]+\underset{\underset{n\to +\infty} {y\to x}}\limsup\ z(y,t_n)\nonumber\\
&\leq&\underset{(y,s)\to (x,0)}\limsup\ \biggl[\|z(y,\cdot)\|_{C^{0,\frac{\nu}{2}}_{loc}(0,+\infty)}|s|^{\frac{\nu}{2}}\biggr] +\hat{z}(x)=\hat{z}(x)\nonumber
\end{eqnarray} with the last inequality following from Proposition~\ref{localholder}-(ii). Finally, since $\hat{z}$ is independent of the sequence $(t_n)_n$, we have obtained
 $$z(x,t)\to \overline{m}\quad  \hbox{uniformly on  }\overline{B}(\hat{x},r) \hbox{ as }t\to+\infty,$$
for any $\hat{x}\in \Omega$ and $r>0$ such that $\overline{B}(\hat{x},r)\subset\Omega,$  thus proving the statement of  Lemma~\ref{global-limits}.

\vskip0.5em 
\noindent 4.
Now, we are going to prove that, for some $t_0>0$,  $\tilde z(x,t_0)$ has a  maximum point $x_0$ in $\Omega.$ For that purpose, let us assume by contradiction that
\begin{equation}\label{contrary}
x_0\in \domeg \quad\hbox{ and }\quad \tilde{z}<\overline{m}\quad\hbox{ in } \Omega\times(0, t_0].
\end{equation} We pick some $0< \delta< t_0$ and argue in the subset $\Omega^\delta := \{x\in \Omega :0 < \dd(x) < \delta\} $ by introducing the function $\chi$ defined by 
\begin{equation*}
\chi(x,t):=\overline{m}+k(e^{-\dd(x)}-1)- k (t-t_0)\quad\hbox{ for all } (x,t)\in \overline{\Omega^\delta} \times[t_0-\delta, t_0] 
\end{equation*} where $\de$ and  $k$  are  to be chosen later in such a way to obtain
\begin{equation}\label{compare}
\tilde{z}\leq\chi\quad \hbox{ in } \quad\overline{\Omega^\delta} \times[t_0-\delta, t_0].
\end{equation}  
\vskip0.5em 
\noindent (i) We start by proving that $\chi$ satisfies $$\mathcal{L}\chi(x,t)>0\quad \hbox{ in } \Omega^\delta \times(t_0-\delta, t_0].$$
Computing we have  
\begin{eqnarray}
\mathcal{L}\chi(x,t)&=&- k \dd(x)+ke^{-\dd(x)}[\dd(x)\Delta \dd(x)-\dd(x)-\dd(x)\mathcal{H}(x)\cdot Dd(x)]\nonumber.
\end{eqnarray} By using (\ref{calH'}), we deduce that
$$\mathcal{L}\chi(x,t) = ke^{-\dd(x)}[-\dd(x) \, e^{\dd(x)}+ \dd(x)(\Delta \dd(x)  -1)+ m/(m-1)+o_\delta(1)]\,.$$ 
By the regularity of $\domeg$, $\Delta \dd$ is bounded in $\Omega^\delta$ and since $\dd(x)<\delta,$ we can choose $\delta>0$ small enough such that 
$$-\dd(x) \, e^{\dd(x)}+ \dd(x)(\Delta \dd(x)  -1)+ m/(m-1)+o_\delta(1)>0\; ,$$
for all $x$ such that $\dd(x)\leq \delta$. Hence we obtain
\begin{equation}\label{no-int-max}
\mathcal{L}\chi> 0\ \hbox{ on }\ \Omega^\delta\times (t_0-\delta,t_0]\, .
\end{equation} 
\vskip0.5em
\noindent (ii) For all $(x,t)\in\domeg\times (t_0-\delta,t_0],$ we have $\chi(x,t)=\overline{m}-k (t-t_0)\geq \overline{m}$ and knowing that $\tilde{z}\leq\overline{m}$ in $\omegb\times\mathbb{R},$ we easily conclude that 
\begin{equation}\label{ext-bd}
\tilde{z}\leq\chi\quad \hbox{ on } \domeg \times (t_0-\delta, t_0].
\end{equation} 
\vskip0.5em
\noindent (iii) We set  $\Gamma_\delta=\{x\in\omegb:\dd(x)=\delta\}.$ For any $(x,t)\in \Gamma_\delta\times(t_0-\delta ,t_0],$ we have $$\chi(x,t)=\overline{m}+k(e^{- \delta}-1)- k(t-t_0)\geq \overline{m}+k(e^{-\delta}-1).$$
Moreover, since $\tilde{z}(x,t)<\overline{m}$ in $\Omega\times (0, t_0],$ we have $\tilde{z}(x,t)<\overline{m} \hbox{ on }\Gamma_\delta\times[t_0-\de,t_0].$ We use the upper semi-continuity of $\tilde{z}$ to define 
$$
\eta(\delta):=\min_{\Gamma_\delta\times [t_0-\delta,t_0]}(\overline{m}-\tilde{z})
$$
and we find that $\tilde{z}(x,t)\leq\overline{m}-\eta(\delta) \hbox{ on }\Gamma_\delta\times [t_0-\delta,t_0].$ By choosing $k>0$ such that $k(e^{- \delta}-1)\geq-\eta(\delta),$ that is 
\begin{equation}\label{k}
0<k\leq\frac{\eta(\delta)}{1-e^{- \delta}},
\ee
one gets
\begin{equation}\label{int-bd}
\tilde{z}\leq\chi\quad \hbox{ on } \Gamma_\delta \times(t_0-\delta, t_0].
\end{equation} 
\vskip0.5em
\noindent (iv) For all $x\in \overline{\Omega^\delta},$ we have 
$$
\chi(x,t_0-\delta)=\overline{m}+k(e^{- \dd(x)}-1)+k \delta\geq \overline m + k (e^{-\delta}-1+\delta)\geq \overline m
$$ 
hence we deduce
\begin{equation}\label{initial-time}
\tilde{z}<\chi\quad \hbox{ on } \overline{\Omega^\delta} \times\{t_0-\delta\}.
\end{equation}
\vskip0.5em 
\noindent 5. 
Now we use  (\ref{halfrelaxed4}) in the interval $(t_0-\delta, t_0)$, together with  (\ref{no-int-max}), (\ref{ext-bd}), (\ref{int-bd}) and (\ref{initial-time}), so that   applying the comparison principle on $\overline{\Omega^\delta}\times[t_0-\delta,t_0]$  we finally conclude that (\ref{compare}) holds. 

Since $\tilde{z}(x_0,t_0)=\overline{m}=\chi(x_0,t_0),$ it follows from (\ref{compare}) that $\tilde{z}-\chi$ achieves its global maximum on $\overline{\Omega^\delta}\times[t_0-\delta,t_0]$
at $(x_0,t_0).$ By using the definition of the viscosity subsolution $\tilde{z}$ of (\ref{halfrelaxed4}), it necessarily follows that 
\be\label{bd}
\min\{\mathcal{L}\chi(x_0,t_0), \overline{m}+K\}\leq 0.
\ee But $\mathcal{L}\chi(x_0,t_0)=\frac{k m}{m-1}>0$ and $\overline{m}+K>0,$ hence we reach a contradiction with (\ref{bd}) and we conclude that (\ref{contrary}) does not hold. Hence, we have that \rife{maxi} holds for some $t_0>0$ and  $x_0\in \Omega$, and by Step 3 we deduce  that  (\ref{maxi'}) holds and the end of the proof follows.

\end{proof}  

\vskip1.0em

\section{The  Non-Convergence Cases: $c=0$  or $c>0$ and $1<m\leq 3/2$}\label{nonconvergence}

The main result of this section is 
\begin{theor}\label{subsolution} Let $\Omega$ be star-shaped with respect to a point $x_0\in\R^N$. Assume\footnote{this assumption is for instance verified when $f\in W^{1,\infty}(\Omega)$  and $$f(x)+c+ \|D f\|_\infty\, {\rm diam}(\Omega)\leq -\de$$} that  there exist  constants $\de>0$ and $\mu_0>0$ such that we have, for any $r\in [1-\mu_0,1)$: 
\begin{equation}\label{nega}
f(x)+c+ \frac{|f(x_0+r(x-x_0))-f(x)|}{(1-r)}\leq -\de
\quad\hbox{ on} \quad\omegb.
\end{equation} 
If $c=0$ and $1<m\leq 2$,  or  if $c> 0$ and $1<m\leq \frac32$,  then 
\begin{equation}\label{nonconvergence1}
u(x,t)+ct\to -\infty \quad\hbox{ as } t\to + \infty \,\quad \hbox{ locally uniformly in $\Omega$. } 
\end{equation} More precisely, there exist  a solution $\vfi$ of \rife{vhje-sub} and continuous functions $r(t)$, $H(t)$ such that
\begin{equation}\label{tesiexa}
u(x,t)+ct\leq r(t)^{\frac{2-m}{m-1}}\vfi(x_0+ r(t)(x-x_0)) - H(t)
\end{equation}
where $r(t)\uparrow 1$ and $H(t)\to + \infty$ as $t\to+\infty$ with the following rate
\begin{itemize}

\item[(a)] If $c>0$ then 
\begin{equation}\label{H-behavior}
\begin{cases}
H(t)=O(t^{\frac{3-2m}{2-m}})& \hbox{ if } 1<m<\frac32,\\
H(t)=O(\log t)& \hbox{ if } m=\frac32.
\end{cases}
\end{equation}

\item[(b)] If $c=0$ then 
 \begin{equation}\label{H-behavior2}
\begin{cases}
H(t)=O(t^{2-m})& \hbox{ if } 1<m<2,\\
H(t)=O(\log t)& \hbox{ if } m=2.
\end{cases}
\end{equation}
\end{itemize}
\end{theor}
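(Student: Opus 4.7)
The overall plan is to construct an explicit supersolution of the equation satisfied by $v(x,t):=u(x,t)+ct$ (namely $v_t-\Delta v+|Dv|^m=f+c$ with lateral data $g+ct$) of the scaled form
\begin{equation*}
\psi(x,t) := r(t)^\alpha\,\varphi_0(x_0 + r(t)(x-x_0)) - H(t), \qquad \alpha:=\frac{2-m}{m-1},
\end{equation*}
where $\varphi_0$ is the nonnegative solution of the ergodic problem (\ref{vhje-sub}) provided by Theorem \ref{limits}, $r(t)\in(0,1)$ is increasing to $1$, and $H(t)$ is chosen increasing to $+\infty$. The star-shapedness of $\Omega$ ensures that $y(x,t):=x_0+r(t)(x-x_0)$ belongs to $\Omega$ whenever $x\in\omegb$ and $r(t)<1$, and moreover gives $\dd(y)\gtrsim (1-r(t))$ when $x\in\domeg$, so that $\psi$ is well defined on $\omegb\times[0,\infty)$ as long as $r<1$. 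Once $\psi$ is shown to be a supersolution dominating $g+ct$ on $\domeg\times[0,\infty)$ and $u_0$ at $t=0$ (arranged by picking $r(0)$ small and $H(0)$ sufficiently negative), the comparison principle for $E(\Omega,f,g,u_0)$ (\cite[Theorem 2.1]{TTT}) yields $v\leq\psi$, whence (\ref{tesiexa}) and (\ref{nonconvergence1}) (up to adjusting $\varphi_0$ by an additive constant).

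The Hamilton--Jacobi homogeneity $m(\alpha+1)=\alpha+2$ reduces the interior supersolution condition to
\begin{equation*}
\alpha r^{\alpha-1}\dot r\,\varphi_0(y)+r^{\alpha+1}\dot r\,(x-x_0)\cdot D\varphi_0(y)-\dot H \;\geq\; F(x)-r^{\alpha+2}F(y), \quad F:=f+c.
\end{equation*}
Writing $F(x)-r^{\alpha+2}F(y)=(1-r^{\alpha+2})F(x)+r^{\alpha+2}(F(x)-F(y))$, applying (\ref{nega}) (which gives $F\leq-\delta$ and $|F(x)-F(y)|\leq(-F(x)-\delta)(1-r)$), and expanding $1-2r^{\alpha+2}+r^{\alpha+3}=(\alpha+1)(1-r)+o(1-r)$ yields $F(x)-r^{\alpha+2}F(y)\leq-\delta_0(1-r)$ for some $\delta_0>0$ and all $r$ close enough to $1$. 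On the LHS, the first term is nonnegative (since $\varphi_0\geq 0$), and the second term is $O(\dot r)$ on any compact $K\subset\Omega$ via (\ref{gradientbound}), while near $\domeg$ it is nonnegative thanks to the sharp first-order asymptotic (\ref{pv})---which gives $D\varphi_0(y)\sim c_m\,\dd(y)^{-1/(m-1)}\nu(y)$---combined with the star-shaped condition $(x-x_0)\cdot\nu(\pi(y))\geq\rho>0$. It therefore suffices to impose the differential constraint $\dot H + C\dot r\leq\delta_0(1-r)$.

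The binding constraint is the lateral one $\psi(x,t)\geq g(x)+ct$ on $\domeg\times[0,\infty)$. Using $\dd(y)\asymp 1-r(t)$ for $x\in\domeg$ together with the blow-up profiles (\ref{expansion4})--(\ref{expansion4'}), one finds $r^\alpha\varphi_0(y)\asymp(1-r)^{-\alpha}$ if $1<m<2$ and $\varphi_0(y)\asymp|\log(1-r)|$ if $m=2$. The boundary inequality then reads $(1-r(t))^{-\alpha}\gtrsim H(t)+ct$ (resp.\ $|\log(1-r(t))|\gtrsim H(t)+ct$ when $m=2$). Saturating this boundary bound and plugging into the interior constraint $\dot H\asymp(1-r)$ produces the closed scalar ODE
\begin{equation*}
\dot H\asymp(H+ct)^{-1/\alpha} \ (1<m<2),\qquad \dot H\asymp e^{-(H+ct)} \ (m=2),
\end{equation*}
whose integration gives $H(t)\to+\infty$ with exactly the rates (\ref{H-behavior})--(\ref{H-behavior2}). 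In the excluded case $c>0$ and $m>3/2$ (so $\alpha<1$) the very same analysis would leave $H$ bounded, which is fully consistent with the sharpness of the hypothesis on $m$.

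The main obstacle, to my mind, is the joint verification of the supersolution inequality on $\omegb$ close to $\domeg$, where both sides carry singular contributions: the crude gradient bound (\ref{gradientbound}) is insufficient, and one must invoke the sharper first-order expansion (\ref{pv}) in order to extract the favorable sign of $(x-x_0)\cdot D\varphi_0(y)$ from the star-shapedness. Matching the $O(1-r)$ gain from (\ref{nega}) on the right against the infinitesimal loss $\dot H$ on the left, while simultaneously reconciling the interior ODE and the boundary ODE for $(r,H)$, is the delicate point; it is precisely this balance that selects the thresholds $m=3/2$ (when $c>0$) and $m=2$ (when $c=0$).
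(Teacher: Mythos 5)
Your proposal is essentially the same as the paper's argument: both build the scaled supersolution $r(t)^{\frac{2-m}{m-1}}\vfi\bigl(x_0+r(t)(x-x_0)\bigr)-H(t)$, use the star-shapedness together with the sharp gradient asymptotic \rife{pv} to obtain the good sign of $(x-x_0)\cdot D\vfi$ near $\domeg$, exploit \rife{nega} to gain $O(1-r)$ in the interior, and then pick $r(t)$ so that the boundary inequality $r^\alpha\vfi\gtrsim g+ct+H$ holds, which forces exactly the rates \rife{H-behavior}--\rife{H-behavior2}. The only cosmetic differences are a sign/exponent typo ($r^{\alpha+1}\dot r$ should be $r^\alpha\dot r$ in the $D\vfi$ term) and the bookkeeping: the paper makes the time-derivative term pointwise nonnegative by choosing $\vfi=\vfi_0+L$ with $L$ large, whereas you absorb its possible negative part into the budget $\dot H + C\dot r\le\delta_0(1-r)$, which is equivalent since $\dot r = o(1-r)$ for the chosen $r(t)$.
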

\vskip0.5em

Let us recall that the ergodic constant $c$ depends itself on $f$, so that the assumptions made in the above theorem are not obviously checked. However,  such  assumptions  can actually happen to be true. To fix the ideas, consider the following Example which provides a specific case where condition (\ref{nega}) holds.

\begin{remark}\label{f-const}
Assumption \rife{nega} is always verified when $f$ is a constant.

We first consider the case that $f=0$ and denote by $c_0$ the corresponding ergodic constant. One can easily deduce that $c_0<0$. This is a  consequence of  the characterization (\ref{const-ergodic}) in Proposition~\ref{existencecondition}: indeed the constant functions are subsolutions of (\ref{additive-equation}) with $\lambda = 0$ while Proposition~\ref{existencecondition} yields that the infimum in (\ref{const-ergodic}) is not attained.
Consider now the case that $f=f_0$ is  a constant, possibly different from zero. Of course the   corresponding ergodic constant now is $c=c_0-f_0$. For every value of $f_0$,  \rife{nega} is clearly verified with any  $\de<|c_0|$. 


Next, it is not difficult to construct some function $f$ which is not constant and such that \rife{nega} is   verified. In particular, a small perturbation (in Lipschitz norm) of the constant $f_0$  still verifies \rife{nega}; indeed it is easy to check that the ergodic constant $c$ depends continuously on $f$ (with respect to perturbations in the sup-norm) as a consequence of formula (\ref{const-ergodic}).
\end{remark}

\vskip0.5em
\begin{proof}[\bf Proof of Theorem~\ref{subsolution}] Without loss of generality, assume that $x_0=0$. Moreover observe that we can always replace $\de$ with a smaller value in \rife{nega} without loss of generality.

In order  to prove that (\ref{tesiexa}) holds,   it is sufficient to prove that $r(t)^{\frac{2-m}{m-1}}\vfi(r(t)x) - H(t)$ is a supersolution of $E(\Omega,f+c,g+ ct ,u_0)$, since then  the estimate (\ref{tesiexa}) will follow by an application of the comparison result. 
Let then $\vfi$ be a solution of (\ref{vhje-sub}), we define the function $\vfi_r$ on $\overline{\Omega}\times [0,+\infty)$ as follows: 
$$
\vfi_r(x,t)=r(t)^{\frac{2-m}{m-1}}\vfi(r(t)x)\,
$$
where $r(t)$ will be chosen below in a way that $0<r(t)<1$ and $r(t)\uparrow 1$ as $t\to+\infty$. In particular, we fix
$$
r(0)=1- \mu_0
$$ 
so that $r(t)\in [1-\mu_0,1)$ and \rife{nega} may be applied.  Moreover, by the uniqueness result  for the ergodic problem, we can represent $\vfi$ as    $\vfi=\vfi_0+L$ with  $\vfi_0$ being the unique solution of (\ref{vhje-sub}) such that $\min_{\overline{\Omega}}\vfi_0 =0$ and $L$  a constant to be fixed later. In particular, note that $\vfi\geq L$ since $\vfi_0\geq0$ on $\overline{\Omega}$.

\vskip0.3em
\noindent 1. Our first step is in $\Omega.$ It is easy to see that
$$
-\Delta \vfi_r+|D \vfi_r|^m=r^{\frac{m}{m-1}}\left(-\Delta \vfi+|D \vfi|^m \right)
$$ hence, with the notation $A(z)=z_t-\Delta z+|D z|^m,$ we get
\begin{equation}\label{ex1}
A(\vfi_r)= r(t)^{\frac m{m-1}}(f(r(t)x)+c)+r(t)^{\frac{2-m}{m-1}}r'(t)\left(\frac{2-m}{m-1}\,\frac{\vfi(r(t)x)}{r(t)}+D \vfi(r(t)x)\cdot x\right).
\end{equation} 
Using the result (\ref{pv}), we know that  $\frac{\partial \vfi}{\partial \tau(x)}=o\left(\frac{\partial \vfi}{\partial \nu(x)} \right)$ as $x\to \domeg$, where $\tau$ and $\nu$ are tangential and normal vectors. Then, since $\Omega$ is  star-shaped with respect to $0$ and since  $D \vfi\cdot \nu>0$, we deduce the existence of $\sigma>0$ such that  $d(y)<\sigma$  implies $D \vfi(y)\cdot y\geq 0$.  Hence, since $\vfi$ is smooth inside $\Omega$, we have
$$
D \vfi(y)\cdot y\geq \chi_{\{d(y)>\sigma\}} \geq - C_\sigma
$$
for some constant $C_\sigma>0$. We deduce that
$$
D \vfi(r(t)x)\cdot x\geq  - \frac{C_\sigma}{r(t)}\,.
$$ 
In particular we have, for $L$ large enough,
$$  
\frac{2-m}{m-1}\,\frac{\vfi(r(t)x)}{r(t)}+D \vfi(r(t)x)\cdot x\geq  \frac1{r(t)}\left( \frac{2-m}{m-1}L-C_\sigma\right) >0.
$$
Going back to (\ref{ex1}), we have, using that $r(t)< 1$,
\begin{eqnarray*}
A(\vfi_r)&\geq&  r(t)^{\frac m{m-1}}(f(r(t)x)+c)\\
&=&r(t)^{\frac m{m-1}}(f(r(t)x)-f(x))+(r(t)^{\frac m{m-1}}-1)(f(x)+c)+f(x)+c\\
&\geq&- (1-r(t))\frac{| f(r(t)x)-f(x))|}{(1-r(t))}  + (r(t)^{\frac m{m-1}}-1)(f(x)+c)+f(x)+c\\
&\geq&-(1-r(t)^{\frac m{m-1}})\left[\frac{| f(r(t)x)-f(x))|}{(1-r(t))}+f(x)+c\right]+f(x)+c.
\end{eqnarray*}
Now we use (\ref{nega}) and we finally obtain
\begin{equation}\label{ex2}
A(\vfi_r)\geq  f(x)+c +  \de\left(1-r(t)^{\frac m{m-1}}\right) \,.
\end{equation} 
Setting 
$$
\tilde H(t)= \int_0^t\left(1-r(s)^{\frac m{m-1}}\right) ds\, ,
$$ 
it follows that $$A(\vfi_r(x,t)-\de \, \tilde H(t))\geq f(x)+c = A(u(x,t)+ct).$$ 

\vskip1em
\noindent 2.	Now, we consider the boundary $\domeg$ and  we turn to the choice of $r(t)$, which depends on the different values of $m$ and $c$. 

\begin{itemize}
\item[(i)]  When $c>0$ and $1<m<\frac32$. First observe that from (\ref{expansion4}), when $x\in \domeg$, we have that
\begin{equation}\label{boundarychoice}
\vfi(r(t)x)\geq K^*(1-r(t))^{-\frac{2-m}{m-1}}+L\,
\end{equation}
for some positive constant $K^*$.
 Then we choose $r$ such that 
\begin{equation*}
1-r(t)=\left(\frac{(c+\lambda)t+K^*\mu_0^{-\frac{2-m}{m-1}}}{K^*}\right)^{-\frac{m-1}{2-m}} \quad\hbox{ for all } t\geq0
\end{equation*} where   $\lambda>0$ will be determined later on. Note that $r(t)$ is increasing, $r(0)=1-\mu_0$, and $r(t)\uparrow 1$ as $t\to +\infty$. On the boundary, we get
$$
\vfi_r(x,t)\geq (1-\mu_0)^{\frac{2-m}{m-1}}\left( (c+\lambda)t+L+K^*\mu_0^{-\frac{2-m}{m-1}}\right),
$$
hence, up to choosing $\la$ sufficiently large (only depending on $c$ and $\mu_0$), we have
\begin{equation}\label{r-choice}
\vfi_r(x,t)\geq 2 ct+ (1-\mu_0)^{\frac{2-m}{m-1}}\left( L+K^*\mu_0^{-\frac{2-m}{m-1}}\right)\,.
\end{equation}
Next, since 
$$
\tilde H(t)= \int_0^t \left(1-r(s)^{\frac m{m-1}}\right) ds\leq K\, t
$$
for some $K$ depending on $c$, $K^*$, $\mu_0$, $\la$, and  since the boundary datum $g$ is bounded on $\domeg$, we have from \rife{r-choice}
$$
\vfi_r(x,t)\geq g(x)+ct+ \de\, \tilde H(t)
$$
up to taking in \rife{nega} some $\de$ eventually smaller and choosing  $L$ large enough.
We conclude that  $$\vfi_r(x,t)-H(t)\geq u(x,t)+ct\quad\hbox{ for all } x\in \domeg,\ t\geq0\,,
$$ 
where $H(t)= \de\, \tilde H(t)$.
\vskip0.4em
\item[(ii)] When $c=0$  and $1<m<2$,  we set
$$
1-r(t)= (\mu_0^{-\frac1{m-1}}+  t)^{-(m-1)}
$$
As before, $r(t)$ is increasing, $r(0)=1-\mu_0$ and $r(t)\uparrow 1$.  Using \rife{boundarychoice} we have, for every $x\in \domeg$,
$$
\vfi_r(x,t) \geq r(0)^{\frac{2-m}{m-1}}\vfi(r(t)x)\geq (1-\mu_0)^{\frac{2-m}{m-1}}\, \left(K^* (\mu_0^{-\frac1{m-1}}+  t)^{2-m}+L\right)\,,
$$
while 
$$
\tilde H(t)= \int_0^t \left(1-r(s)^{\frac m{m-1}}\right) ds\leq K\, \int_0^t \left(1-r(s)\right) ds = \frac K{2-m} \, (\mu_0^{-\frac1{m-1}}+  t)^{2-m}\,.
$$
Therefore, choosing $\de$ eventually smaller in \rife{nega},  and then choosing $L$ sufficiently large, we obtain again
\be\label{compg}
\vfi_r(x,t)\geq g(x)+ \de\tilde H(t)\quad\hbox{ for all } x\in \domeg,\ t\geq0\,,
\ee
which means that $\vfi_r(x,t)\geq u(x,t)+H(t)$ on  the lateral boundary with $H(t)= \de\,\tilde H(t)$.
\vskip0.4em
\item[(iii)] When $c=0$ and $m=2$. Here recall that we have
$$
\vfi(r(t)x)\geq -\log (1-r(t))- K^*+L\,
$$
for some positive constant $K^*$. We set now  
$$
1-r(t)= \frac1{t+\mu_0^{-1}}\,,
$$
hence
$$
\vfi_r(x,t) \geq \log (t+\mu_0^{-1})-K^*+L\,.
$$
Since 
$$
\tilde H(t)= \int_0^t \left(1-r(s)^{\frac m{m-1}}\right) ds\leq K\, \int_0^t \left(1-r(s)\right) ds= K\, \log(t+\mu_0^{-1})\,,
$$
we get again \rife{compg} by choosing $\de$ sufficiently small and $L$ large enough.
\end{itemize}

\vskip1em
\noindent 3.	Finally, at $t=0,$ we have
$$
\vfi_r(x,0)=r(0)^{\frac{2-m}{m-1}}\vfi(r(0)x) =(1-\mu_0)^{\frac{2-m}{m-1}}\vfi((1-\mu_0) x)\geq L(1-\mu_0)^{\frac{2-m}{m-1}} 
$$
hence for $L$  large we obtain $\vfi_r(x,0)\geq u_0(x)$ for all $x\in \omegb$.  

\vskip1em
\noindent 4. We conclude from the above comparison that $\vfi_r(x,t)\geq u(x,t)+ct+ H(t)$.  Now, according to the value of $c$ and $m$,  the choice of $r(t)$ in (i)--(iii) gives the rates (\ref{H-behavior}) or (\ref{H-behavior2}) claimed for $H$.
\end{proof}

\section{Appendix}

In this section, we will give the proofs of Theorem~\ref{supersolution} and Proposition \ref{localholder}-(i). 
\vskip0.2em
\begin{proof}[\bf Proof of Theorem~\ref{supersolution}]

\vskip0.2em
1. Let  $ d(x)$  be the signed distance function, which is negative when $x\not\in \overline\Omega.$ Let us fix  $ \de_0>0$ such that  $  d(x)$ is $C^2$ in $\{x\in \R^N: |  d(x)|<  2\de_0\}$ and set
$$
\tilde d(x)= \chi(  d(x))\,,
$$ 
where $\chi(s)$ is a smooth,  nondecreasing function such that
$\chi(s)=s$ for $0<|s|<\frac{\de_0}2$ and $\chi(s)$ is constant for $|s|>\de_0.$ Without loss of generality, we may have $\chi'(s)\leq 1$ for every $s.$ 
  Consider now the vector field
$$
n_k(x)=- \int_{\R^N} D \tilde d(y)\,\rho_k(x-y)\,dy
$$
where $\rho_k$ is a standard mollifying kernel (supported in the ball $B_{\frac1k}(0)$). Recalling the definition of $\tilde d$, the field $n_k(x)$ is supported in a  neighborhood of $\partial \Omega$ of radius $\de_0+\frac1k$, and  we have $n_k\in C^\infty$ for $k$ large. Moreover,  using the properties of $d(x)$ and in particular that   $ d\in C^2$, we have
\be\label{nk1}
|n_k|\leq 1\,,\quad |Dn_k|\leq \|D^2 d\|_\infty\,,\quad |D^2 n_k|\leq k \, \|D^2 d\|_\infty\,.
\ee
Clearly $n_k$ is  an approximation of the normal vector $\nu(x)=-Dd(x)$, in particular
\be\label{nk2}
|n_k(x)+D \tilde d(x)|\leq \frac{\|D^2 d\|_\infty}k\,.
\ee
Then we consider the function
$$
v(x,t)=\ga(t) \vfi(x-\mu(t)\, n_k(x))\,\qquad \hbox{ where $\vfi(x)= \vfi_0(x)- L.$ } 
$$
Here   $\vfi_0$ is the unique solution of \rife{vhje-sub} such that $\min\vfi_0=0$, and  $L$  is an additive constant to be chosen, whereas  $\ga(t) $ and $\mu(t)$ are positive functions, with values in $(0,1)$, that   will be fixed later in a  way that $\ga(t)\uparrow 1$ and $\mu(t)\downarrow 0$ as $t\to+\infty.$  
Observe that since 
$$
d(x-\mu(t)\, n_k(x))=d(x)- \mu \,D d(x)\cdot n_k(x) + O(\mu^2),
$$ by choosing  $k \geq 2\|D^2 d\|_{\infty}$ it follows that for any $x\in \Omega$ such that  $d(x)\leq \frac{\de_0} 2,$ we have, by using \rife{nk2} and the definition of $\tilde d,$ that:
 \be\label{std}
d(x)+ \frac12 \mu + O(\mu^2)\leq d(x-\mu(t)\, n_k(x))\leq d(x)+ \frac32 \mu + O(\mu^2).
\ee In the following, we fix $k$ as above. Moreover, in order to have   $\mu(t)$  sufficiently small,  it is enough to fix $\mu(0)$ small enough, since $\mu(t)$ is decreasing.  To fix the ideas, we set   $\beta=\mu(0),$ and we choose $\beta$ small enough so that, thanks to \rife{std}, we have the following:
\be\label{std2}
 \begin{array}{rl}
 &x-\mu(t)\, n_k(x)\in \Omega\quad\quad \forall \, x\in \Omega, t\geq 0, \\
 \noalign{\medskip}
 &d(x)+ \frac14 \mu(t)  \leq d(x-\mu(t)\, n_k(x))\leq d(x)+ 2 \mu(t)  \qquad \hbox{ in } \Omega^{\de_0/2}\times (0,+\infty).
 \end{array}
 \ee
 Note that this choice of $\beta$ only depends on $\de_0$ and $\|D^2d\|_\infty$, in other words only on the domain $\Omega$, and, eventually, we are allowed   to  take a smaller value of $\beta$ if needed later.
 \vskip1em

\vskip0.3em

2.	Let us compute now the equation for $v.$ Henceforth, we denote,  for any function $z$, 
$A(z):=z_t-\Delta z+|D z|^m$, and we use the letter $K$ to denote possibly different constants only depending on $\Omega$, $m$, $f$, $c.$

Since $D v= \ga(t)(I-\mu(t) Dn_k(x))D \vfi(x-\mu(t)\,n_k(x))$, using \rife{nk1}  and that $\ga,$ $\mu<1,$ we get 
$$
\begin{array}{c}
A(v)\leq -\ga \Delta \vfi + \ga^m\, (1+K\mu)^m |D \vfi |^m 
\\
\noalign{\medskip}+ \ga\, K \mu \left( |Dn_k(x)|\, |D^2\vfi| + |D\vfi|\right)  
+\ga'(t)\vfi -\mu'(t)\,\ga\, D\vfi \cdot n_k(x)
\end{array}
$$
where the argument of $\vfi$ is $x-\mu(t)n_k(x).$
Since $\vfi$ satisfies (\ref{vhje-sub}), we deduce that
\begin{equation}\label{uffa}
\begin{array}{c}
A(v)\leq \ga(f(x-\mu(t)n_k(x))+c)+\ga\left( \ga^{m-1}(1+K\mu)^m -1\right) |D \vfi|^m \\
\noalign{\medskip}
+ \ga\, K \mu \left( |Dn_k(x)|\,  |D^2\vfi| + |D\vfi|\right) +\ga'(t)\vfi -\mu'(t)\,\ga(t) D\vfi \cdot n_k(x).
\end{array}
\end{equation}
From the Lipschitz continuity of $f,$ we have
$$
\ga(f(x+\mu(t)n_k(x))+c)\leq (f(x)+c)  
 +(1-\ga(t)) \|(f+c)^-\|_{L^\infty(\Omega)}+\mu(t) \|D f\|_{L^\infty(\Omega)}.
$$
Moreover, since $\vfi=\vfi_0-L,$  using  (\ref{fi0}) for $\vfi_0$  and that $2-m<1$ we deduce that we have,   in the whole range $1<m\leq 2$,  
$$
\vfi \leq K (1+|D\vfi|)- L\qquad \forall x\in \Omega\,,
$$
for some constant $K>0$. We also use 
(\ref{Hessianfi2}), which we can suppose to hold true in the support of $n_k(x)$    without loss of generality. Therefore, we obtain from \rife{uffa}:
\begin{eqnarray*}
A(v)&\leq& f(x)+c +(1-\ga(t)) \|(f+c)^-\|_{L^\infty(\Omega)}+ \mu(t)\|D f\|_{L^\infty(\Omega)}\\
&&+ \ga\left( \ga^{m-1}(1+K\mu)^m -1+K\mu\right) |D \vfi|^m  \\
&&+\ga'(t)[K(1+|D \vfi|)-L] +\ga(t)\, K (|\mu'(t)|+ \mu(t))|D\vfi|
\,,
\end{eqnarray*} 
hence there exists a constant, still denoted by $K$, such that
\begin{eqnarray*}
A(v)&\leq& f(x)+c +(1-\ga(t)) \|(f+c)^-\|_{L^\infty(\Omega)}+ \mu(t)\|D f\|_{L^\infty(\Omega)}\\
&&+ \ga\left( \ga^{m-1}-1+K\mu\right) |D \vfi|^m +\ga'(t)[K(1+|D \vfi|)-L] \\
&&+\ga(t)\, K (|\mu'(t)|+ \mu(t))|D\vfi|.
\end{eqnarray*} 
Here we take $L>K$ and we choose  
$\ga(t)$ such that
\begin{equation}\label{gam}
\ga(t)^{m-1}=1-\la \mu(t)
\end{equation}
for some $\la$ large enough. Without loss of generality, we can assume that $\mu(t)$  is small in a  way that $\ga(t)>0$ (this amounts to ask $\beta=\mu(0)<\frac1\la$).  Then we obtain
\begin{eqnarray*}
A(v)&\leq& f(x)+c +(1-\ga(t)) \|(f+c)^-\|_{L^\infty(\Omega)}+ \mu(t)\|D f\|_{L^\infty(\Omega)} \,\\
&&+ \ga\mu\left(K-\la)\right |D \vfi|^m +\ga'(t)\,K \, |D \vfi|+\ga(t)\,K (|\mu'(t)|+\mu(t))| D\vfi | 
\end{eqnarray*} 
which yields,  by applying Young's inequality, 
\begin{eqnarray*}
A(v)&\leq& f(x)+c +(1-\ga(t)) \|(f+c)^-\|_{L^\infty(\Omega)}+ \mu(t)\|D f\|_{L^\infty(\Omega)} \,\\
&&+ \frac12\ga\mu(K-\la) |D \vfi|^m    +K\,\ga\,\mu\,  \left( \frac{\ga'(t)}{\ga\,\mu}\right)^{\frac{m}{m-1}} +K \,\ga\,\mu\,  \left(\frac{|\mu'|+\mu}{\mu}\right)^{\frac m{m-1}} 
\end{eqnarray*} 
hence, choosing $\la>K$ we get
\begin{eqnarray}\label{last}
A(v)&\leq& f(x)+c +(1-\ga(t)) \|(f+c)^-\|_{L^\infty(\Omega)}+ \mu(t)\|D f\|_{L^\infty(\Omega)} \nonumber\\
 &&+K\,\ga\,\mu\,  \left( \frac{\ga'(t)}{\ga\,\mu}\right)^{\frac{m}{m-1}}+K \,\ga\,\mu\,  \left(\frac{|\mu'|+\mu}{\mu}\right)^{\frac m{m-1}} 
\,.
\end{eqnarray}

3. Let us consider now the boundary. Since we have from \rife{std2}
$$
d(x-\mu(t)n_k(x))\geq \frac14 \mu(t) \quad\hbox{ for all } x\in \domeg\,,
$$
using the asymptotic behavior \rife{expansion4}--\rife{expansion4'}  for $\vfi_0$ we deduce that there exists a  constant $K^*$ such that
$$
\vfi_0(x-\mu(t)n_k(x))\leq K^*(\mu(t)^{-\al}+1)-L \quad\hbox{ for all } x\in \domeg
$$ 
when $\al=\frac{2-m}{m-1}>0$ (i.e. $m<2$), while when $m=2$ we have 
$$
\vfi_0(x-\mu(t)n_k(x))\leq - \log \mu(t)+ K^*-L \quad\hbox{ for all } x\in \domeg\,.
$$
We are going to take later $L$ sufficiently large. In particular, considering $L>K^*$ we deduce that for every $x\in \partial \Omega$ and $t>0$:
\begin{equation}\label{boun}
\begin{cases} 
\hbox{if $1<m<2$,} &\quad v(x,t)\leq K^* \mu(t)^{-\al}+ \ga(0)(K^*-L)\quad  \\
\hbox{if $m=2$,} &\quad v(x,t)\leq -\log \mu(t)+\ga(0)(K^*-L) 
\end{cases}
\end{equation} 
Moreover,  for the initial condition we have
\begin{equation}\label{ii}
v(x,0)=\ga(0)\vfi_0(x-\mu(0) n_k(x))-\ga(0) L \,.
\end{equation}
Let us now distinguish the choice of $\mu(t)$ according to different situations:
\begin{itemize}

\item[(i)] If $c>0$ and $1<m<2$, we set
$$ \mu(t)=\left( \frac{ct+K^*\beta^{-\al}}{K^*}\right)^{-\frac1\al}\,,
$$
where $\alpha=\frac{2-m}{m-1}.$ The value of  $\beta=\mu(0)$ has been already   chosen, as explained before, in order that \rife{std2} holds true and  also $1-\la\,\beta>0$, this choice only depends on   $\Omega$, $f$, $m.$ 
Note that $\mu(t)$ is a decreasing function and satisfies 
$$
K^*\mu(t)^{-\al}=ct+K^*\beta^{-\al}\,,$$ 
hence   (\ref{boun}) implies
\begin{equation*}
v(x,t)\leq ct +K^*\beta^{-\al}+\ga(0)(K^*-L)\quad\hbox{ for all }x\in \domeg\,.
\end{equation*}
Since $\ga(0)^{m-1}=1-\la\,\mu(0)=1-\la \beta>0$, and  since $g$ is bounded in $\domeg$, up to choosing $L$ sufficiently large we will have 
\begin{equation}\label{boun2}
v(x,t)\leq g(x)+ct\quad\hbox{ for all }x\in \domeg.
\end{equation}
Similarly, 
from the boundedness of $u_0$
on $\omegb,$ up to choosing $L$ large enough (again depending on $\beta$), we obtain from \rife{ii}
\begin{equation}\label{in}
v(x,0)\leq u_0(x) \quad\hbox{ for all } x\in \omegb.
\end{equation}
Consider now the behavior of $\mu(t)$, which implies that $\mu(t)\to 0$, and $\mu'(t)= o(\mu(t))$ as $t\to +\infty.$ 
By definition of $\ga$ in \rife{gam}, we have  that $\ga(t)\uparrow1,$ $1-\ga(t)=O(\mu(t))$  and $\ga'(t)=O(|\mu'(t)|)$ as $t\to+\infty.$  In particular we have
\be\label{mum}
\ga\,\mu\,  \left( \frac{\ga'}{\ga\,\mu}\right)^{\frac{m}{m-1}} +\ga\,\mu\,  \left(\frac{|\mu'|+\mu}{\mu}\right)^{\frac m{m-1}} \leq K\, \mu
\ee
hence we deduce from \rife{last}

\be\label{inside}
\begin{array}{c}
A(v)\leq f(x)+c +K\,\mu(t)\left(1+ \|(f+c)^-\|_{L^\infty(\Omega)}+ \|D f\|_{L^\infty(\Omega)} \right)\,.
\end{array}
\ee
Therefore, if  we 
set 
$$
\overline K=K\left(1+ \|(f+c)^-\|_{L^\infty(\Omega)}+ \|D f\|_{L^\infty(\Omega)} \right)
$$
and we define 
$$
w=u+ct+\overline K\int_0^t\mu(s)ds\,,
$$
we can conclude that $A(w) \geq A(v).$ Moreover, from \rife{boun2} and \rife{in} we have that $v\leq w$ on the parabolic boundary. By the standard comparison result, it follows that 
\begin{equation}\label{comp}
u(x,t)+ct\geq v(x,t)-\overline K \int_0^t\mu(s)ds
\end{equation}
for all $(x,t)\in \omegb\times[0,+\infty).$ Since $\mu(t)= O\left(t^{-\frac1\alpha}\right)$ as $t\to +\infty$, the conclusion of  (\ref{tesi2}) follows according to the values of $\alpha$, i.e. of $m.$ Note in particular that $\mu(t)\in L^1(0,\infty)$ if and only if $\al<1$ which corresponds to $m>\frac32.$

\item[(ii)] If $c>0$ and $m=2$, we set 
$$
\mu(t)=\beta\,e^{-ct}\,,
$$
where $\beta=\mu(0)$ is chosen as before. With this choice we have from \rife{boun}
$$
v(x,t)\leq ct -\log\beta+\ga(0)(K^*-L)\,,
$$
and  choosing $L$ large enogh we deduce \rife{boun2}. Of course  \rife{in} remains true as before. Finally,  in this case we have $\mu'(t)= O(\mu(t))$ as $t\to +\infty$, and then $\ga'(t)=O(\mu(t))$ as well; therefore \rife{mum} still holds true. Thus we obtain again   \rife{inside} and we conclude as before the inequality     \rife{comp}. Being $\mu$ integrable in $(0,+\infty)$, this implies \rife{tesi2} for the case $m=2.$

\item[(iii)] If $c=0$ and $1<m<2$, we set $\mu(t)$ as
$$
\mu(t)= (\beta^{-(1+\al)}+ \Lambda\, t)^{-\frac1{1+\al}}
$$
where $\Lambda$ will be  fixed later. The initial condition  $\mu(0)=\beta$  is fixed as before. We have
$$
\mu'(t)=  \frac{\Lambda\, \mu(t)}{\beta^{-(1+\al)}+\Lambda t}\leq \beta^{1+\al}\,\Lambda\,\mu(t)
$$
hence,  using also the definition of $\gamma$,  we obtain \rife{mum} with a constant depending on $\Lambda.$
We deduce then  from \rife{last} (recall that here $c=0$)
\be\label{cada}
\begin{array}{c}
A(v)\leq f(x) +  \overline K \,\mu(t) +K_\Lambda\,\mu(t)
\,,
\end{array}
\ee
where
$$
\overline K =K\, \left( 1+ \|(f)^-\|_{L^\infty(\Omega)}+ \|Df\|_{\infty}\right) 
$$
is a constant only depending on  $\Omega$, $m$, $f.$  In particular we obtain that
the function
\be\label{c=0}
w=u+(\overline K+K_\Lambda) \int_0^t\mu(s)ds
\ee
satisfies $A(w)\geq A(v).$ 
Now we choose $\Lambda=\frac{\overline K\,(1+\al)}{K^*\,\al}$, in order to have
\begin{eqnarray}
K^*\mu(t)^{-\al}&=& K^*\,(\beta^{-(1+\al)}+ \Lambda\, t)^{\frac\al{1+\al}}\nonumber\\
&=& K^*\Lambda\,\frac{\al}{1+\al}  \int_0^t \mu(s)\,ds+ K^* \beta^{-\alpha}\nonumber \\
&=& \overline K \int_0^t \mu(s)\,ds+K^*\beta^{-\alpha}\nonumber\\
&\leq& w(x,t)-u(x,t)+ K^*\beta^{-\al}.\nonumber
\end{eqnarray}
Therefore  from \rife{boun} we deduce that on the boundary
$$
v(x,t)\leq  w(x,t)- g(x) +K^*\beta^{-\al} +\ga(0)(K^*-L)\quad \hbox{ for all }  x\in \partial \Omega.
$$
 Choosing $L$ large enough we conclude that $v(x,t)\leq w(x,t)$ for every $x\in \partial \Omega$ and $t>0$; as before, we also have \rife{in}, hence by comparison we conclude that
$$
u(x,t)\geq v(x,t)-(\overline K+K_\Lambda) \int_0^t\mu(s)ds
$$
Since $\int_0^t \mu(s)\,ds= O\left( t^{\frac \al{1+\al}}\right)$ as $t\to +\infty$, we obtain \rife{tesi3} by definition of $\al.$

\item[(iv)] If $c=0$ and $m=2$ we set
$$
\mu(t)= \frac1{\Lambda \, t+\beta^{-1}}\,.
$$
Since we have
$$
|\mu'(t)|\leq \frac{\Lambda\,\mu(t)}{\Lambda \, t+\beta^{-1}}\leq \Lambda\, \beta\, \mu(t)
$$
we obtain \rife{cada} as before, hence   $A(w)\geq A(v)$ where $w$ is defined in \rife{c=0}. Since 
$$
-\log \mu=\Lambda\,  \int_0^t \mu(s)\,ds-  \log \beta
$$
we choose $\Lambda=\overline K$ and by using \rife{boun} we obtain on the lateral boundary: 
\begin{eqnarray}
v(x,t)&\leq&  \overline K\, \int_0^t \mu(s)\,ds -  \log \beta+ \ga(0)(K^*-L) \nonumber\\
&\leq& w(x,t)-g(x) -  \log \beta+ \ga(0)(K^*-L)\nonumber\,.
\end{eqnarray}
We conclude that $v\leq w$ on the boundary up to choosing $L$ large enough and then,  by comparison, we deduce that $v(x,t)\leq w(x,t)$ in $\Omega\times (0,T)$, which implies \rife{tesi3} for the case $m=2.$
\end{itemize}
\end{proof} 

\vskip0.5em
Now, we turn to the proof of the interior H\"older estimates on $v$ with respect to the $x$-variable uniformly in $t\in (0,+\infty)$ which is based on an idea introduced by Ishii and Lions \cite{IL}. This idea has been already used for instance in Barles \cite{BAR1} and Barles and Souganidis \cite{BS} to show gradient estimates of viscosity solutions to quasilinear elliptic ans parabolic PDE with Lipschitz initial conditions, by Barles and Da Lio \cite{BDL2} to prove local H\"older estimates up to the boundary of bounded solutions to fully non linear elliptic PDE with Neumann boundary conditions and by Da Lio \cite{DaLio1} to obtain $C^{0,\nu}$-estimates for viscosity solutions of parabolic equations with nonlinear Neumann-type  boundary conditions.
\vskip0.5em
\begin{proof}[\bf Proof of Proposition~\ref{localholder}-(i)]
We fix $ \nu\in (0,1),$ $\delta>0,$ $\eta>0$ and $x_0\in \Omega$ such that $\dd (x_0)\geq \delta.$ We are going to show that there exists a suitable constant $C$ depending on $\nu$, $\delta$, $\eta$,  $M:=\sup_{K_{\frac\delta 2}\times (\eta/2,+\infty)}|v|$ and the data of the problem such that, for all $y\in B(x_0,\delta /2)$ and $t_0\geq \eta,$ we have
\begin{equation}\label{holder}
v(x_0,t_0)-v(y,t_0)\leq C|x_0-y|^\nu \, . 
\end{equation} The property (\ref{holder}) clearly implies the $C^{0,\nu}$-estimates of $v(\cdot,t_0)$ in $K_\delta.$ Indeed, for all $x, y \in K_\delta,$ if $|x-y|\geq \delta/2$ then $$|v(x,t_0)-v(y,t_0)|\leq \frac{2^{\nu+1}M}{\delta^\nu}|x-y|^\nu.$$ But when $|x-y| < \delta/2,$ we can therefore apply (\ref{holder}) with $x_0=x$ and $y\in B(x,\delta/2)$ or with $x_0=y$ and $x\in B(y,\delta/2)$ and finally obtain the desired estimates.
\vskip0.5em

To prove (\ref{holder}), we consider the function $(x,y,t)\mapsto \Phi(x,y,t)$ defined on $\Omega \times \Omega  \times(0,+\infty)$ as follows
\begin{equation*}
\Phi(x,y,t)=v(x,t)-v(y,t)-\vfi(|x-y|)-L(|x-x_0|^2+|t-t_0|^2)
\end{equation*} where  $\vfi(t)=Ct^\nu$. The constants $L>0$ and $C>0$ will be chosen in such a way that $\Phi$ is a non-positive function.  

We first choose $L$ and $C$ in order to have 
$$\Phi\leq 0\quad\hbox{ on }\quad\partial \biggl(\overline{B}(x_0,\delta/2)\times \overline{B}(x_0,\delta/2)\times(\eta/2,+\infty)\biggr).$$ 
This leads to the constraints
$$
L\left(\frac{\delta}{4}\right)^2 \geq 2M\; ,\; L\left(\frac{\eta}{2}\right)^2 \geq 2M \; \hbox{and}\; C\left(\frac{\delta}{4}\right)^\nu \geq 2M\; .$$
With these choices of $C$ and $L,$ it is easily checked that $\Phi(x,y,t)\leq 0$ for any $x,y,t$ such that $|x-y|\geq \delta/4$ or $|x-x_0|\geq \delta/4$ or $|t-t_0|Ê\geq \eta/2$, and putting together these properties, we clearly have the desired property.
\vskip0.5em
Next, $L$ being fixed as above, we argue by contradiction, assuming that, for any constant $C$ (satisfying the above constraints), we have
\begin{equation}\label{maximum}
\mathcal{M}_{C,L}:=\underset{\overline{B}(x_0,\delta/2)\times\overline{B}(x_0,\delta/2)\times[\eta/2,+\infty)}\max\Phi(x,y,t) > 0.
\end{equation} Let $(\bar{x},\bar{y},\bar{t})\in\overline{B}(x_0,\delta/2)\times \overline{B}(x_0,\delta/2)\times [\eta/2,+\infty)$ be a maximum point of $\Phi;$ we have dropped the dependence of $\bar{x},\bar{y}$ and $\bar{t}$ in $C$ for sake of simplicity. By using (\ref{maximum}), it is clear that $\bar{x}\neq\bar{y},$ otherwise, we would have $\mathcal{M}_{C,L}\leq0.$ With the choices of $C$ and $L$ we made above, it is obvious that $(\bar{x},\bar{y},\bar{t})\in B(x_0,\delta/2)\times B(x_0,\delta/2)\times(\eta/2,+\infty).$ 

From (\ref{maximum}), we get 
\begin{equation}\label{C-large-loc}
C|\bar{x}-\bar{y}|^\nu+L(|\bar{x}-x_0|^2+|\bar{t}-t_0|^2)\leq v(\bar{x},\bar{t})-v(\bar{y},\bar{t})
\end{equation} which yields 
\begin{equation}\label{C-large}
C|\bar{x}-\bar{y}|^\nu\leq 2M \quad\hbox{ and }  \quad L(|\bar{x}-x_0|^2+|\bar{t}-t_0|^2)\leq 2M
\end{equation} and it follows, in particular, that $|\bar{x}-\bar{y}|\to0$ as $C\to+\infty$ and we recall that we may assume without loss of generality that $|\bar{x}-\bar{y}|>0$ for $C$ large enough.

We define $\phi$ by 
\begin{equation}\label{matrixlemma}
\phi(x-y)=\vfi(|x-y|)
\end{equation} and use the arguments of \cite[Proposition IV.1]{IL} to prove the existence of two $N\times N$ symmetric matrices  $B_1$ and $B_2$, and $a,b \in \R$ such that 
\begin{equation}\label{matriciallemma}
\left(
\begin{array}{cc}
  B_1-2LI & 0 \\
  0 & -B_2 \\
\end{array}%
\right)\leq
\left(
\begin{array}{cc}
  D^2\phi(\bar{x}-\bar{y})& -D^2\phi(\bar{x}-\bar{y}) \\
  -D^2\phi(\bar{x}-\bar{y}) & D^2\phi(\bar{x}-\bar{y}) \\
\end{array}%
\right) \; ,
\end{equation} $a- b - 2L(\bar{t}-t_0) \geq 0$ and 
\begin{equation}\label{viscosityineq}
\left\{
\begin{array}{rl}
a - Tr (B_1) + |p|^m -f(\bar{x})-c\leq0\\
b - Tr (B_2) + |q|^m -f(\bar{y})-c\geq0
\end{array}
\right. 
\end{equation}
where $$p= D\phi(\bar{x}-\bar{y})+2L(\bar{x}-x_0)\quad\hbox{ and }\quad q=D\phi(\bar{x}-\bar{y}).$$
For all $\xi,\zeta\in\R^N,$ we rewrite (\ref{matriciallemma}) as
\begin{equation}\label{matriciallemma1}
\langle B_1\xi,\xi\rangle-\langle B_2\zeta,\zeta\rangle\leq \langle D^2\phi(\bar{x}-\bar{y})(\xi-\zeta),\xi-\zeta\rangle +2L|\xi|^2.
\end{equation} Let $(e_i)_{1\leq i\leq N-1}$ be a familly of $(N-1)$ vectors in $\R^N$ such that $(e_1,e_2, ...,e_{N-1},\frac{ q}{|q|})$ is an orthonormal basis of $\R^N.$ Plugging successively $\xi=\zeta=e_i$ for all $i=1,...,N-1$ and $\xi=-\zeta=\frac{ q}{|q|}$ in (\ref{matriciallemma1}) and by adding all the inequalities obtained, we obtain:
\begin{eqnarray}\label{trace1}
Tr(B_1-B_2)&=&\sum_{i=1}^{N-1}\langle (B_1-B_2)e_i,e_i\rangle +\langle (B_1-B_2)\frac{ q}{|q|},\frac{ q}{|q|}\rangle\nonumber\\
&\leq&2NL+4 \frac{\langle D^2\phi(\bar{x}-\bar{y}) q,q\rangle}{|q|^2}.
\end{eqnarray} Going back to the form of $\phi,$ we set $\chi(z)=|z|$ and obtain 
\begin{equation}\label{trace2}
\langle D^2\phi(\bar{x}-\bar{y}) q,q\rangle=\vfi'\langle D^2\chi(\bar{x}-\bar{y}) q,q\rangle+\vfi''\langle (D\chi(\bar{x}-\bar{y})\otimes D\chi(\bar{x}-\bar{y}))  q,q\rangle.
\end{equation} But, knowing that $D\chi(z)=\frac{z}{|z|},$ it follows that $|D\chi(z)|^2=1$ which yields by differentiation $2D^2\chi(z)D\chi(z)=0$ and we obtain $D^2\chi(\bar{x}-\bar{y})q=0$ by taking $z=\bar{x}-\bar{y}.$ Therefore, we use (\ref{trace2}) and find that(\ref{trace1}) becomes
\begin{equation*}
Tr(B_1-B_2)\leq2NL+4\vfi'' \frac{\langle (q\otimes q)  q,q\rangle}{|q|^2}=2NL+4\vfi''\nonumber
\end{equation*} Finally, we obtain
\begin{equation}\label{trace3}
Tr(B_1-B_2)\leq 2LN+4C\nu(\nu-1)|\bar{x}-\bar{y}|^{\nu-2}.
\end{equation} By subtracting the two inequalities in (\ref{viscosityineq}) and using (\ref{trace3}), we have 
$$ 2L(\bar{t}-t_0)+4C\nu(1-\nu)|\bar{x}-\bar{y}|^{\nu-2} \leq  |q|^m-|p|^m+f(\bar{x})-f(\bar{y})+2LN\; .$$
Then the convexity of $p \mapsto |p|^m$ yields $|p|^m \geq |q|^m + m |q|^{m-2}q\cdot (p-q)$ and therefore
$$ |q|^m-|p|^m \leq m |q|^{m-1}|p-q| \leq m L\delta |q|^{m-1}\; ,$$
since $ |\bar{x}-x_0| \leq \delta/2$.

Plugging this estimate in the above inequality gives
$$
2L(\bar{t}-t_0)+4C\nu(1-\nu)|\bar{x}-\bar{y}|^{\nu-2}
\leq m L\delta |q|^{m-1} + 2 \|f\|_\infty  +2LN \; ,$$
and therefore
$$
2L(\bar{t}-t_0)+4C\nu(1-\nu)|\bar{x}-\bar{y}|^{\nu-2}
\leq m L\delta (\nu C|\bar{x}-\bar{y}|^{(\nu-1)})^{m-1} + 2 \|f\|_\infty  +2LN \; ,$$
Hence, using that $2L|\bar{t}-t_0| \leq 2L^{1/2}(2M)^{1/2}$, if $\tilde K := 2L^{1/2}(2M)^{1/2} + 2 \|f\|_\infty  +2LN$, we finally have
\begin{equation*}
4\nu(1-\nu)
\leq \tilde K \frac{|\bar{x}-\bar{y}|^{2-\nu}}{C}+mL\delta \nu^{m-1}C^{m-2}|\bar{x}-\bar{y}|^{(\nu-1)(m-2)+1}\; .
\end{equation*} 
But  $m-2\leq 0$ and $(\nu-1)(m-2)+1 \geq 1$ since $\nu-1\leq 0$ as well; from (\ref{C-large}), we know that $|\bar{x}-\bar{y}| \to 0$ when $C$ tends to infinity. Therefore it is clear  that this inequality cannot hold for $C$ large enough, thus contradicting (\ref{maximum}). Then,  if $C$ is large enough,  the estimate (\ref{holder}) holds true. Examining this proof, we find that $C$ depends on $\nu$, $\delta$, $\eta$, $M:=\sup_{K_{\frac\delta 2}\times (\eta/2,+\infty)}|v|$, $\|f\|_\infty$ and the constants $m$, $N$. 

\vskip0.5em

\end{proof}
\vskip0.5em
\noindent {\bf Acknowledgements.}  
This work was partially supported by the ANR ``Hamilton-Jacobi et thŽorie KAM faible'' (ANR-07-BLAN-3-187245), the AUF (Agence Universitaire de la Francophonie) scholarship program and the SARIMA (Soutien aux Activit\'es de
Recherche d'Informatique et de Math\'ematiques en Afrique) project.

The second author wishes to thank the ``Laboratoire de Math\'ematiques et Physique Th\'eorique'' of the ``Universit\'e de Tours'' for the invitation and the warm hospitality given on the occasion of this collaboration.

\end{document}